\newcommand\cns{{\mathsf k}}
\newcommand\bil{{\mathcal B}}
\newcommand\R{\mathbb{R}}
\renewcommand\tt{\boldsymbol{t}}
\newcommand\nn{\boldsymbol{n}}
\newcommand\etab{\boldsymbol{\eta}}
\newcommand\betab{\boldsymbol{\beta}}
\newcommand\taub{\boldsymbol{\tau}}
\newcommand\gammab{\boldsymbol{\gamma}}
\newcommand\esseb{\boldsymbol{s}}
\newcommand\w{w}          % deflections
\newcommand\W{W}      % space of deflections
\newcommand\rots{\boldsymbol{\theta}}  % rotations
\newcommand\Rots{\boldsymbol{\Theta}}  %space of rotations
\newcommand\shears{\gammab}  % shears
\newcommand\Shears{{\boldsymbol{\Sigma}}} % spazio degli shears
\newcommand\Gammaref{\widehat \Gamma} % bordo parametrico
\def\R{\mathbb R}
\def\bv{\,{\bf v}\,}
\def\bF{\,{\bf F}\,}
\def\ba{{\boldsymbol{\alpha}}}
\newcommand{\Wh}{{W}_{h,0}}
\newcommand{\Whref}{\widehat{W}_{h,0}}
\newcommand{\WhrefF}{\widehat{W}_{h}}
\newcommand{\Rotsh}{{\Rots}_{h,0}}
\newcommand{\Rotshref}{\widehat{\Rots}_{h,0}}
\newcommand{\RotshrefF}{\widehat{\Rots}_{h}}
\newcommand{\Shearsh}{{\Shears}_{h,0}}
\newcommand{\Shearshref}{\widehat{\Shears}_{h,0}}
\newcommand{\ShearshrefF}{\widehat{\Shears}_{h}}
\newcommand{\Shearshtilde}{\widetilde{\Shears}_{h,0}}
\newcommand{\cC}{\mathcal{C}}
\newcommand{\Omegaref}{\widehat {\Omega}}
\renewcommand{\ba}{\alpha}
\newtheorem{prop}{Proposition}[section]
\newtheorem{rem}{Remark}[section]
\newtheorem{lemma}{Lemma}[section]
\newtheorem{corol}{Corollary}[section]
\newtheorem{assum}{Assumption}[section]
\def\trait #1 #2 #3 {\vrule width #1pt height #2pt depth #3pt}
\def\fin{\hfill
        \trait .3 5 0
        \trait 5 .3 0
        \kern-5pt
        \trait 5 5 -4.7
        \trait 0.3 5 0
\medskip}
\date{}
\journal{Computer Methods in Applied Mechanics and Engineering}
\begin{document}

\begin{frontmatter}

\title{An isogeometric method for the Reissner-Mindlin plate bending problem}

\author[unimi]{L.~Beir\~ao~da~Veiga}
\ead{lourenco.beirao@unimi.it}

\author[imati]{A.~Buffa}
\ead{annalisa@imati.cnr.it}

\author[unipv]{C.~Lovadina}
\ead{carlo.lovadina@unipv.it}

\author[unipv]{M.~Martinelli\fnref{cor}}
\ead{martinelli@imati.cnr.it}

\author[unipv]{G.~Sangalli}
\ead{giancarlo.sangalli@unipv.it}

\fntext[cor]{Corresponding author}

\address[unimi]{Dipartimento di Matematica ``F.~Enriques", Universit\`a degli Studi di Milano, Via Saldini 50, 20133 Milano, Italy}
\address[imati]{IMATI - CNR, Via Ferrata 1, 27100 Pavia, Italy}
\address[unipv]{Dipartimento di Matematica ``F.~Casorati", Universit\`a di Pavia, Via Ferrata 1, 27100 Pavia, Italy}

\begin{abstract}
We present a new isogeometric method for the discretization of the Reissner-Mindlin plate bending problem. 
The proposed scheme follows a recent theoretical framework that makes possible to construct a space
of smooth discrete deflections $W_h$ and a space of smooth discrete rotations $\Rots_h$ such that the Kirchhoff contstraint is exactly satisfied at the limit.
Therefore we obtain a formulation which is natural  from the theoretical/mechanical viewpoint and locking free by construction.
We prove that the method is uniformly stable and satisfies optimal convergence estimates. Finally, the theoretical results are fully supported by numerical tests.
\end{abstract}

\begin{keyword}
Isogeometric analysis, Reissner Mindlin plates, De~Rham diagram
\end{keyword}

\end{frontmatter}
%\maketitle

%---------------------------------------------------------------------------
\section{Introduction}\label{introduction}
%---------------------------------------------------------------------------

The Reissner-Mindlin theory is widely used to describe the bending
behavior of an elastic plate loaded by a transverse force. Despite
its simple formulation, the discretization by means of finite
elements is not straightforward, since standard low-order schemes
exhibit a severe lack of convergence whenever the thickness is too
small with respect to the other characteristic dimensions of the
plate.  This undesirable phenomenon, known as {\em shear locking},
is nowadays well understood: as the plate thickness tends to zero,
the Reissner--Mindlin model enforces the Kirchhoff constraint,
which is typically too severe for Finite Element Methods (FEM), especially if low-order polynomials are employed (see, for
instance, the monograph by Brezzi and Fortin~\cite{brefor}). 
Roughly speaking, the root of the shear locking phenomenon is that {\em the space of discrete functions which satisfy the Kirchhoff 
constraint is very small, and does not properly approximate a generic plate solution}. 
The most popular way to overcome the shear locking phenomenon in FEM is to
reduce the influence of the shear energy by considering a mixed
formulation and/or suitable {\em shear reduction operator}. As a consequence, the choice of the discrete spaces
requires particular care, also because of the possible occurrence
of {\em spurious modes}. A vast engineering and
mathematical literature is devoted to the design and analysis of
plate elements for FEM; we mention here, in a totally non-exhaustive way,
the
works~\cite{arfabas,aulov,Be04,BFS91,chasten,DL92,falktu,HF88,lovadina05,lova,tesshughes}. 

%A common feature of all FEM in  literature is that, in order to overcome
%locking, the shear strain condition 
%\begin{equation}\label{intro:kirch}
%t^2 \shears = k \mu (\rots - \nabla\w) \ , 
%\end{equation}
%is imposed in a relaxed way, where the meaning of the above symbols
%can be found in Section \ref{RMmodel} below. \Rd NON E` MEGLIO DIRE
%QUA? \B This constraint relaxation \Rd can be interpreted \B through a mixed
%formulation, which is equivalent to a projection of the shear strains,
%or adopting directly a reduction operator. 

In 2005, IsoGeometric Analysis (IGA) has been  introduced by
T.J.R. Hughes and co-authors in \cite{HCB05} as a  novel technique for the
discretization of partial differential equations. IGA  is  having a
growing impact on several fields, from 
fluid dynamics \cite{BCHZ,BdFS}, to 
structural mechanics 
\cite{ABLR08,MR2361474,BBHH,LEBEH} and
electromagnetics \cite{BSV,BRSV}.  A comprehensive reference for  IGA is the book \cite{CoHuBa:book}.

IGA methodologies are designed with the aim of improving the
interoperability between numerical simulation of physical phenomena and the
Computer Aided Design (CAD) systems. Indeed, the ultimate goal is to
drastically reduce the error in the representation  of the computational
domain and the re-meshing by the use of the ``exact'' CAD  geometry
directly at the coarsest level of discretization. This is achieved by
using  B-Splines or Non Uniform Rational B-Splines (NURBS) for the geometry
description as well as for the representation of the unknown
fields. The use of Spline  or NURBS functions, together with
isoparametric concepts, results in an extremely successful idea and
paves the way to many new numerical schemes enjoying features that
would be extremely hard  to achieve within a standard FEM.
Splines and NURBS 
offer a flexible set of basis functions for which refinement,
de-refinement,  degree elevation and mesh deformation are very  efficient
(e.g., \cite{
%CoHuRe07, %13/06/2011: Manca in bibliografia (Max)
LEBEH}).
 Beside the fact that one  can directly  treat 
geometries described by Splines and NURBS parametrizations, these functions are
interesting in themselves since they easily allow global smoothness beyond the classical $C^0$-continuity of FEM. 
 This feature has been advantageously exploited in
 recent works, for example \cite{MR2361474},
%\cite{GCBH}, %13/06/2011: Manca in bibliografia (Max)
 and \cite{HRS08},  and studied in
\cite{BBRS}, \cite{EBBH}.

Furthermore,  the intrinsic regularity of the Spline basis functions opened the way
 to completely new discretization schemes for Maxwell equations
 \cite{BSV,BRSV}, as
 well as for other problems, such as,  the Stokes problem
 \cite{BdFS}. These schemes are based on  suitable \emph{smooth}
 approximation of differential forms, verifying a De~Rham diagram  in the spirit of
 \cite{AFW10}.  The proposed theoretical framework can be usefully
 adopted also for  discretizing the Reissner-Mindlin plate system which is the object of the present paper. Indeed,
 it makes possible to construct a space
 of smooth
 discrete deflections $W_h$ as discrete 0-forms and  a space of smooth discrete rotations $\Rots_h$ as discrete 1-forms such that it holds:
\[ \nabla W_h \subseteq \Rots_h. \] 
 This allows us to select approximation spaces containing a subspace which both exactly satisfies the Kirchhoff 
 constraint and has optimal approximation features.
 Therefore, we obtain a simple formulation which is 
natural  from the theoretical/mechanical viewpoint since it is built to be locking free, and it is easy to study.  

Finally, the method inherits all the advantages, mentioned above, which are typical of IGA: the capability to incorporate exactly CAD  geometries, and the flexibility in the choice of the polynomial degree and function regularity. Regarding the first advantage, the importance of reproducing the exact geometry in plate analysis has been underlined very clearly in \cite{BP88}. 

The outline of the paper is the following. Section \ref{RMmodel} is devoted to the description of Reissner-Mindlin model and \ref{sec:splines} provides basics definition of spline spaces. Our discretization method is proposed in Section \ref{sec:disc-spaces} and analysed in Section \ref{sec:conv}. Finally, Section \ref{sec:nums} is devoted to the numerical validation.

%---------------------------------------------------------------------------
\section{The Reissner-Mindlin plate bending problem}\label{RMmodel}
%---------------------------------------------------------------------------

{ Let $\Omega $ be a Lipschitz bounded open set of 
$\mathbb{R}^2$  representing the midsurface of the plate and let $\Gamma$ be its boundary.  Other assumptions on the domain $\Omega$ will be set at the end of Section \ref{sec:bspline2d}. We assume
that the boundary $\Gamma$  is the union of three
disjoint sets
$$
\overline{\Gamma} = \overline{\Gamma}_f \cup \overline{\Gamma}_s \cup \overline{\Gamma}_c \ ,
$$}
with $\Gamma_f,\Gamma_s,\Gamma_c$ being a finite union of
connected components. The plate is clamped in $\Gamma_c$, simply
supported in $\Gamma_s$ and free in $\Gamma_f$. We assume for
simplicity of exposition that all boundary conditions are
homogeneous, and that the union $\Gamma_s \cup \Gamma_c$ has
positive measure, in order to neglect rigid body motions of the
plate. Finally, let $\Gamma_s$ be divided into a soft and a hard
part (both being a finite union of connected components), i.e.
$\Gamma_s = \Gamma_{ss} \cup \Gamma_{sh}$. We then set the spaces
for deflections and rotations 
\begin{equation*}%\label{eq:W-Rots-definition}
\begin{aligned}
\W_{0} & = \{ v\in H^1(\Omega) \ : \ v = 0 \ \textrm{on }\ \Gamma_s\cup\Gamma_c \}  \\
\Rots_{0} & = \{ \etab\in [H^1(\Omega)]^2 \ : \ \etab= 0 \ \textrm{on
} \ \Gamma_c \ , \etab\cdot\tt=0 \ \textrm{on } \ \Gamma_{sh} \} ,
\end{aligned}
\end{equation*}
with $\tt$ the unit tangent to $\Gamma$ obtained by an
anti-clockwise rotation of the outward normal $\nn$.

Following the Reissner-Mindlin model, see for instance
\cite{brefor}, the plate bending problem requires to solve
\begin{equation}\label{P}
\left\{
\begin{aligned}
& \text{Find  $\rots\in \Rots_{0}, \w \in
\W_{0}$  such  that} \\
& a(\rots,\etab)+ \mu k t^{-2} (\rots- \nabla \w, \etab-\nabla v)
= (f,v) \quad \forall \etab \in \Rots_{0}, v \in \W_{0} \ ,
\end{aligned}
\right.
\end{equation}
where $\mu$ is the shear modulus  and $k$ is the so-called shear
correction factor. Above, $t$ represents the plate thickness, $\w$
the deflection, $\rots$ the rotation of the normal fibers and $f$
the applied scaled transversal load. Moreover, $( \cdot,\cdot )$ stands
for the standard scalar product in $L^2(\Omega)$ and the bilinear
form $a(\cdot, \cdot)$ is defined by
$$
a(\rots,\etab) = (\mathbb{C} \varepsilon(\rots),
\varepsilon(\etab)),
$$
with $\mathbb{C}$ the positive definite tensor of bending moduli
and $\varepsilon (\cdot)$ the symmetric gradient operator.
Introducing the scaled shear stresses $\shears= \mu k
t^{-2}(\rots- \nabla \w)$, Problem~\eqref{P} can be written in
terms of the following mixed variational formulation:
\begin{equation}\label{Pmixed}
\left\{
\begin{aligned}
& \text{Find  $\rots\in \Rots_{0}, \w \in
\W_{0}, \gammab \in \Shears$  such  that} \\
& a(\rots,\etab)+ (\shears, \etab-\nabla v) = (f,v) \quad \forall
\etab \in \Rots_{0}, v \in \W_{0} \\
&(\rots- \nabla \w,\esseb)- \frac{t^2}{\mu k}(\shears,\esseb)=0
\quad \forall \esseb\in \Shears \ ,
\end{aligned}
\right.
\end{equation}
where $ \Shears = [L^2(\Omega)]^2 $.

The above problem is well posed and $t$-uniformly stable when using 
the $H^1$ norm for the spaces $\Rots_{0}, \W_{0}$, and the norm:
$$
|| \esseb ||_{\Shears} : = t \| \esseb\|_{L^2} +  \sup_{(\etab,v)\in \Rots_0\times\W_0}   \frac{ (\esseb, \etab-\nabla v)}{(||\etab ||_{H^1}^2 + ||v ||_{H^1}^2)^{1/2}}.
$$ 
for the space $\Shears$ (see \cite{brefor}, for instance).
To simplify notation, and without any loss of generality,  we will
assume $\mu k = 1$ in the analysis that follows.

%---------------------------------------------------------------------------
\section{B-splines and piece-wise smooth functions}\label{sec:splines}
%---------------------------------------------------------------------------

\subsection {B-spline spaces and piece-wise smooth functions in one dimension}

Given positive integers $p$ and $n$, such that  $n \ge p + 1$,
we introduce the ordered knot vector
\begin{equation*}
\Xi := \{0 = \xi_1, \xi_2, \dots, \xi_{n+p+1} = 1 \} \,,
%\label{knot_vector}
\end{equation*}
where we allow repetitions of knots, that is, we assume $\xi_1
\leq \xi_2 \leq \dots \leq \xi_{n+p+1}$.
In the following we will only work with {\it open} knot vectors,
which means that the first $p+1$ knots
in $\Xi$ are equal to $0$, and the last $p+1$ are equal to
$1$, and we assume that all internal knots have multiplicity $r$, $1
\le r \le p+1$,  so that
 $$\Xi = \{\underbrace{\zeta_1, \dots, \zeta_{1}
}_{p+1 \text{ times}},\underbrace{\zeta_2, \dots, \zeta_{2} }_{r
  \text{ times}},\ldots, \underbrace{\zeta_m, \dots, \zeta_{m} }_{p+1
  \text{ times}}\} .$$
The  vector $$\mathcal {Z} = \{0=\zeta_1, \zeta_2, \dots, \zeta_m =1\} $$
represents the (ordered) vector of knots without repetitions, and the
relation $  m = \frac{n-p-1}{r} + 2$ holds.

Through the iterative procedure detailed in \cite{HCB05} we
construct $p$-degree (that is, $(p+1)$-order)  \mbox{B-spline} basis functions,
denoted by $B_i$, for
$i=1,\ldots,n$.  These basis functions are piecewise polynomials of
degree $p$ on the subdivision  $ \{\zeta_1, \dots, \zeta_m \} $. At
$\zeta_i$ they have $\alpha := p-r$ continuous derivatives.
Therefore, $-1 \leq \alpha \leq p-1$:  the
maximum multiplicity allowed, $r= p+1$, gives  $\alpha = -1 $,
which  stands for a discontinuity at each $\zeta_i$.
Each basis function $B_i$ is non-negative and supported in the
interval $[\xi_i , \xi_{i+p+1}]$. Moreover, these \mbox{B-spline} functions
constitute a partition of unity, that is
\begin{equation*}
\sum_{i=1}^n B_i(x) = 1 \quad \forall x \in (0,1).
\end{equation*}
The space of \mbox{B-splines} spanned by the basis functions $B_i$ will be
denoted by
\begin{equation*}
S^p_{\ba} := {\rm span} \{ B_i \}_{i=1}^n.
\end{equation*}
%
%An example of quadratic \mbox{B-splines} constructed from
%\Rd the  open knot vector $$\Xi = \{ 0, 0, 0, 1/5, 2/5, 3/5, 3/5, 4/5, 1, 1, 1\} $$
%is presented in \mbox{Figure \ref{fig:bsplines}}. In this case $\ba= \{ -1, 1,1,0,1,-1\} $.
%Notice that, since
%the knot $\xi_6 = \xi_7 = \zeta_4=3/5$ has multiplicity $r_4=2$, the fourth, fifth and sixth
%functions are only continuous ($ \alpha_4 =0$) at that point.
%
Derivatives of splines are splines as well. Let $S^{p+1}_{\alpha+1} $
  and  $S^{p}_{\alpha} $  be spline spaces constructed, according with
  the notation above, on the same subdivision $\{\zeta_1, \dots, \zeta_m \} $.   Then, it is easy to see that
\begin{equation}
  \label{eq:derivative-of-splines}
  \left \{\frac{d}{dx} v : v \in S^{p+1}_{\ba+1} \right \} = S^p_{\ba},
\end{equation}
Notice moreover that
\begin{equation}
  \label{eq:dimension-Spline-p}
  \#S^p_{\ba} = p+1 +(m-2)(p-\alpha),
\end{equation}
and
\begin{equation}
  \label{eq:dimension-Spline-p+1}
  \#S^{p+1}_{\ba+1} = p+2 +(m-2)(p-\alpha),
\end{equation}
where $\#$ is used to denote the dimension of the linear space. Then,
from (\ref{eq:dimension-Spline-p})--(\ref{eq:dimension-Spline-p+1}),
$ \#S^{p+1}_{\ba+1} = \#S^p_{\ba} +1 $, in agreement with  the fact
that the derivative is a surjective operator from $ S^{p+1}_{\ba+1}
$ to  $S^p_{\ba} $ and has a one-dimensional kernel, the constants.

%\begin{figure}[htb]
%\centering
%\includegraphics[width=.9\linewidth]{spline_basis}
%\caption{Quadratic B-splines basis functions constructed from the open knot vector $\Xi=\{ 0,0,0,1,2,3,4,4,1,1,1\}$.}
%\label{fig:bsplines}
%\end{figure}
%
We denote by $\cC^\infty_\ba$ the space of piecewise smooth functions on
%$ \{\zeta_1, \dots, \zeta_m \} $,
$(0,1)$, whose restriction to each subinterval
$(\zeta_i, \zeta_{i+1})$ admits a $C^\infty$ extension to the closed interval
$[ \zeta_i, \zeta_{i+1} ]$ with $\alpha$ continuous derivatives at
$\zeta_i$, for all $i=2,\dots, m-1$.

\subsection {B-spline spaces and piece-wise smooth functions in two dimensions}\label{sec:bspline2d}

The definition of \mbox{B-splines} spaces given above can be extended to two dimensions
as follows.
Let us consider the square $\Omegaref = (0,1)^2 \subset \R^2$, which will be referred to
as \emph{parametric domain}.
Given integers $p_d$, $r_{d}$, $n_d$ and $\alpha_{d} = p_{d}-r_{d}$, with $d=1,2$,
we introduce the knot vectors $\Xi_d = \{\xi_{1,d},
\xi_{2,d},\ldots ,\xi_{ n_d + p_d + 1,d}\} $ and the associated
vectors $ \mathcal {Z} _d= \{\zeta_{1,d}, \dots, \zeta_{m_d,d} \}$ as in the one-dimensional case.
Associated with
these knot vectors there is a {\it mesh} $\Omegaref_h$ of the
parametric domain, that is, a
partition of $(0,1)^2$ into rectangles:
\begin{equation}
  \label{eq:mesh}
  \Omegaref_h=  \{ Q = \otimes_{d=1,2} (\zeta_{i_d,d},\zeta_{i_d+1,d} ), \  1 \leq i_d \leq m_d-1\}.
\end{equation}
Given an element $Q \in   \Omegaref_h$, we set $h_Q =
\mathrm{diam}(Q)$, and define the global mesh size $h=\max\{h_Q,\ Q\in \Omegaref_h\}$.

In this paper we make the following assumption: 

\begin{assum}\label{ass1}
The parametric mesh $\widehat{\Omega}_h$ is shape regular.
\end{assum}

It is important to remark that, due to the tensor product structure, \textbf{ shape regularity implies quasi-uniformity}, i.e., 
 there exists a  positive constant $\cns$,  fixed once and for all, such that
\begin{equation}\label{quasiunif}
\cns h \le h_Q \le h \ , \quad 
\forall Q \in \widehat{\Omega}_h.
\end{equation}

We associate to  the two given  knot vectors $\Xi_d$, $d=1,2$  the  $p_d$-degree univariate \mbox{B-splines} basis
functions    $B_{i,d}$, with $i = 1, \ldots, n_d$. Then,  on the associated mesh $\Omegaref_h $, we define the
tensor-product \mbox{B-spline} basis functions as
\begin{equation*}
B_{ij} := B_{i,1} \otimes B_{j,2}, \quad i=1,\dots,n_1, \; j =
1,\dots,n_2 \,.
\end{equation*}
Then, the tensor product \mbox{B-spline} space is defined as the space spanned by
these basis functions, namely
\begin{equation*}%\label{eq:tensor-product-space}
 S^{p_1,p_2} _{\ba_1,\ba_2}\equiv  S^{p_1,p_2} _{\ba_1,\ba_2}
 (\Omegaref_h):=S^{p_1} _{\ba_1} \otimes S^{p_2} _{\ba_2}  =  {\rm span} \{B_{ij} \}_{i=1,j=1}^{n_1,n_2} \,.
\end{equation*}
Notice that the space $S^{p_1,p_2} _{\ba_1,\ba_2}
(\Omegaref_h) $ is fully characterized by the mesh $\Omegaref_h$,
by $p_1$, $p_2$, $\ba_1$ and $\ba_2$, as our notation reflects.
The minimum regularity of the space is $\alpha := \min \{\alpha_{d}:\,  d = 1,2 \} $.

In a similar way, we define on $\Omegaref_h $ the space of
piecewise smooth functions with interelement regularity on the vertical and  horizontal
mesh edges given by $\alpha_{1}$ and $\alpha_{2}$ respectively.
This is denoted by
\begin{displaymath}
  \cC^\infty_{\ba_1,\ba_2}= \cC^\infty_{\ba_1,\ba_2}(\Omegaref_h)= \cC^\infty_{\ba_1}\otimes \cC^\infty_{\ba_2}.
\end{displaymath}
Precisely, a function in $\cC^\infty_{\ba_1,\ba_2}$ admits a
$C^\infty$ extension in the closure of each element $Q \in
\Omegaref_h$,   has $\alpha_{1}$ continuous derivatives on the
edges $\{ (x_1, x_2) : x_1=\zeta_{i,1}, \zeta_{j,2} < x_2 <
\zeta_{j+1,2} \}$, for $j=1, \ldots, m_2-1$,
$i=2, \ldots, m_1-1$ and $\alpha_{2}$
continuous derivatives on the edges $\{ (x_1, x_2) : \zeta_{j,1} < x_1
< \zeta_{j+1,1}, x_2=\zeta_{i,2},  \}$, for $j=1, \ldots, m_1-1$, $i=2, \ldots, m_2-1$.
From the definitions, $S^{p_1,p_2} _{\ba_1,\ba_2} \subset
\cC^\infty_{\ba_1,\ba_2}$.

From an initial coarse mesh $ \Omegaref_{h_0}$, refinements
are constructed by knot insertion (with possible
repetition, see \cite{DeBoor}). Therefore,  we end up considering a  family of
meshes $\{\Omegaref_h\}_{h \leq h_0}$ and associated spaces,  with  the global
mesh size  $h$ playing the
role of family index, as usual in finite element literature.

\medskip

We assume that our computational domain $\Omega \subset \R^2$ can be exactly
parametrized by a geometrical mapping $\bF : \Omegaref
\longrightarrow  \Omega  $ which belongs to $(\cC^\infty
_{\alpha_1,\alpha_2} (\Omegaref_{h}))^2$, with  piecewise smooth inverse, and is independent
of the mesh family index $h$. 
The geometrical map $\bF$ naturally
induces a mesh $\Omega_h$ on $\Omega$, which is the image of (\ref{eq:mesh}). 
Notice that, as a conseguence of \eqref{quasiunif} and the boundness of $\bF$ and its inverse, there exist two positive constants $\cns'$,
$\cns''$, fixed once and for all, such that
\begin{equation}\label{quasiunif-phys}
\cns' h \le h_K \le \cns'' h \qquad  \forall K\in\Omega_h \ .
\end{equation}
Finally , $\Gammaref_f \,, \ \Gammaref_s\, \ \Gammaref_c \subset
  \Gammaref = \partial \Omegaref$  denote the preimage of $\Gamma_f\,\
  \Gamma_s$ and $\Gamma_c$, respectively.

%---------------------------------------------------------------------------
\section{ Discretization of the scalar and vector fields }\label{sec:disc-spaces}
%---------------------------------------------------------------------------

\subsection{Spline spaces on the parametric domain}
Given the two (horizontal and vertical) knot vectors,  $\mathcal {Z}
_1 = \{\zeta_{1,1},
\dots, \zeta_{m_1,1} \}$ and  $ \mathcal {Z}
_2 = \{\zeta_{1,2}, \dots, \zeta_{m_2,2} \}$, and
the associated  mesh $\Omegaref_h$ on the parametric
domain $\widehat \Omega$, with $1 \leq \alpha \leq p+1$,  we introduce the following spaces:
\begin{subequations}
\begin{align}
\label{eq:W-parametricF}
&\WhrefF =S^{p,p}_{\alpha,\alpha}(\Omegaref_h) \\
\label{eq:Rots-parametricF}
&\RotshrefF=  S^{p-1,p}_{\alpha-1,\alpha} (\Omegaref_h) \times
S^{p,p-1}_{\alpha,\alpha-1}(\Omegaref_h) \\ 
\label{eq:Shears-parametricF}
&\ShearshrefF =\nabla \WhrefF  + \RotshrefF
\end{align}
\end{subequations}
Since by construction $\nabla \WhrefF \subseteq \RotshrefF$, it clearly holds that $\ShearshrefF = \RotshrefF$. 

Essential boundary conditions has to be set directly in the function spaces and this brings us to the definition:
\begin{subequations}
\begin{align}
\label{eq:W-parametric}
&\Whref \equiv \Whref (p,\alpha)=\left \{ v \in
  S^{p,p}_{\alpha,\alpha}(\Omegaref_h) \, : \, v  = 0 \text{  on } \Gammaref_s
  \cup \Gammaref_c \right \}  \\
\label{eq:Rots-parametric}
&\Rotshref \equiv \Rotshref  (p,\alpha)= \left \{ \etab \in S^{p-1,p}_{\alpha-1,\alpha} (\Omegaref_h) \times
S^{p,p-1}_{\alpha,\alpha-1}(\Omegaref_h) \, : \, \etab  =
\boldsymbol{0} \text{  on } \Gammaref_c \text{ and } \etab  \cdot \hat \tt= 0
\text { on  } \Gammaref_{sh} \right \}  \\
\label{eq:Shears-parametric}
&\Shearshref \equiv \Shearshref  (p,\alpha)=\nabla \Whref  + \Rotshref
\end{align}
\end{subequations}
where $\hat \tt$ is a unit tangent vector at $\Gammaref_{sh}$. 
Notice that now $\nabla \Whref \subseteq \Rotshref$ only if $\Gammaref_c=\emptyset$ and $\Gammaref_{ss} = \emptyset$ (see e.g., \cite{BSV}). As a conseguence, $\Shearshref$ needs a characterization which is the object of the next Lemma:

\begin{lemma}\label{lem:XX}
{ The following characterization holds}
  \begin{equation}
    \label{eq:shear-characterization}
\esseb \in \Shearshref  \Leftrightarrow \left \{
  \begin{aligned}
    &\esseb \in \   \RotshrefF\\
 & \esseb  \cdot \tt= 0 \text{ on  } \Gammaref_{sh}   \cup \Gammaref_c
 \\
&\esseb (\widehat {\boldsymbol x}) = \boldsymbol{0} , \text{ for every
  corner } \widehat {\boldsymbol x} \in \partial \Omegaref \text{ such that }
\widehat {\boldsymbol x} \in   \overline{\Gammaref}_{ss}   \cap
\overline{ \Gammaref}_c.
  \end{aligned}
\right .
  \end{equation}
Moreover, given $\esseb \in  \Shearshref $, there are $ v \in \Whref$,
$\etab \in \Rotshref$, such that
\begin{equation}
  \label{eq:2}
  \begin{aligned}
    \etab - \nabla v & =\esseb ,\\ \| \etab\| _{H^1(\Omegaref)} + \| v
    \| _{H^1(\Omegaref)}   &    \leq C h^{-1} \| \esseb\| _{L^2(\Omegaref)} .
  \end{aligned}
\end{equation}
\end{lemma}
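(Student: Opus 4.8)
The plan is to prove the two implications of \eqref{eq:shear-characterization} in turn, and then to read off the stable splitting \eqref{eq:2} from the explicit construction used in the ``if'' part. \emph{The ``only if'' implication is a direct verification.} Let $\esseb=\etab-\nabla v$ with $\etab\in\Rotshref$, $v\in\Whref$. Then $\esseb\in\RotshrefF$ because $\etab\in\Rotshref\subseteq\RotshrefF$ and $\nabla v\in\nabla\WhrefF\subseteq\RotshrefF$. On $\Gammaref_{sh}\cup\Gammaref_c$ one has $v=0$, so $\partial_\tt v=0$, and $\etab\cdot\tt=0$ there (since $\etab=\boldsymbol 0$ on $\Gammaref_c$ and $\etab\cdot\tt=0$ on $\Gammaref_{sh}$); hence $\esseb\cdot\tt=\etab\cdot\tt-\partial_\tt v=0$ on $\Gammaref_{sh}\cup\Gammaref_c$. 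If $\widehat{\boldsymbol x}\in\overline{\Gammaref}_{ss}\cap\overline{\Gammaref}_c$ is a corner of $\Omegaref$, then $v$ vanishes on the two orthogonal edges of $\partial\Omegaref$ meeting at $\widehat{\boldsymbol x}$, so the derivatives of $v$ along both edge directions vanish at $\widehat{\boldsymbol x}$ and therefore $\nabla v(\widehat{\boldsymbol x})=\boldsymbol 0$; also $\etab(\widehat{\boldsymbol x})=\boldsymbol 0$ by continuity of $\etab$ and $\etab|_{\Gammaref_c}=\boldsymbol 0$. Hence $\esseb(\widehat{\boldsymbol x})=\boldsymbol 0$.

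\emph{The ``if'' implication and the splitting.} Given $\esseb$ satisfying the three conditions on the right of \eqref{eq:shear-characterization}, I would look for $v\in\Whref$ that cancels the normal trace of $\esseb$ on $\Gammaref_c$, and then set $\etab:=\esseb+\nabla v$, so that automatically $\etab-\nabla v=\esseb$. Writing $v=\sum_{i,j}c_{ij}\,B_{i,1}\otimes B_{j,2}$, the open-knot-vector structure entails that, on any edge of $\partial\Omegaref$, the trace of $v$ depends only on the outermost ``layer'' of coefficients, while the normal-derivative trace depends only on the two outermost layers; moreover, thanks to the derivative identity \eqref{eq:derivative-of-splines}, the second layer parametrizes the normal-derivative trace surjectively within the univariate spline space attached to that edge. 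I would then set to zero the outermost layers along $\Gammaref_s\cup\Gammaref_c$ (this gives $v=0$ on $\Gammaref_s\cup\Gammaref_c$), choose the second layer along each edge contained in $\Gammaref_c$ so that $\partial_\nn v=-\,\esseb\cdot\nn$ there — legitimate because, by the first condition in \eqref{eq:shear-characterization}, $\esseb\cdot\nn$ restricted to such an edge belongs to exactly that univariate space — and set every remaining coefficient to zero. One then checks directly that $\etab:=\esseb+\nabla v$ lies in $\Rotshref$: it is in $\RotshrefF$; on $\Gammaref_c$ its normal component vanishes by construction and its tangential component vanishes by the second condition in \eqref{eq:shear-characterization} together with $\partial_\tt v=0$; on $\Gammaref_{sh}$ its tangential component vanishes again by the second condition and by $v|_{\Gammaref_s}=0$. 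Applying the same construction to an arbitrary $\esseb\in\Shearshref$ — which satisfies the three conditions by the ``only if'' part — yields the decomposition required in \eqref{eq:2}.

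\emph{The main obstacle} is the mutual compatibility of these prescriptions at the corners of $\Omegaref$: a corner lies on two edges, and the second-layer coefficient sitting at the corner may be prescribed from each of them. When both edges belong to $\Gammaref_c$, or one belongs to $\Gammaref_{sh}$ and the other to $\Gammaref_c$, the second condition in \eqref{eq:shear-characterization}, applied on the two edges, forces $\esseb(\widehat{\boldsymbol x})=\boldsymbol 0$, which is exactly the information needed for the two prescriptions to agree; when one edge is in $\Gammaref_{ss}$ and the other in $\Gammaref_c$, the second condition yields only one scalar relation at the corner, and the third condition supplies the missing one; in every other configuration at most one incident edge carries a prescription, so nothing has to be checked. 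Carrying out this case analysis carefully — verifying that in each configuration the conditions $(2)$--$(3)$ of \eqref{eq:shear-characterization} give precisely what is needed, and keeping track of how the second-layer coefficients are shared between adjacent edges — is the part of the argument that I expect to require the most care.

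\emph{The estimate.} The $v$ produced above is supported in the (at most) two layers of elements of $\Omegaref_h$ adjacent to $\Gammaref_c$, and each of its nonzero coefficients is bounded by $C h$ times a boundary coefficient of $\esseb$, the factor $h$ being the size of the first knot span (here quasi-uniformity \eqref{quasiunif} is used). A standard $L^2$-stability estimate for the B-spline basis then gives $\|v\|_{L^2(\Omegaref)}\le C h\,\|\esseb\|_{L^2(\Omegaref)}$, and an inverse inequality upgrades this to $\|v\|_{H^1(\Omegaref)}\le C\,\|\esseb\|_{L^2(\Omegaref)}$. Consequently $\|\etab\|_{L^2(\Omegaref)}\le\|\esseb\|_{L^2(\Omegaref)}+\|\nabla v\|_{L^2(\Omegaref)}\le C\,\|\esseb\|_{L^2(\Omegaref)}$, and since $\etab$ is a spline, a second inverse inequality gives $\|\etab\|_{H^1(\Omegaref)}\le C h^{-1}\|\etab\|_{L^2(\Omegaref)}\le C h^{-1}\|\esseb\|_{L^2(\Omegaref)}$; this last step is the source of the factor $h^{-1}$ in \eqref{eq:2}. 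Summing the two bounds (and using $h\le h_0$) yields \eqref{eq:2}.
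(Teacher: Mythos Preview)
Your argument is correct and follows essentially the same route as the paper: the ``only if'' part by direct verification, the ``if'' part and the splitting by choosing $v\in\Whref$ with $\partial_{\nn}v=-\esseb\cdot\nn$ on $\Gammaref_c$ and all other coefficients zero, then setting $\etab=\esseb+\nabla v$ and finishing with inverse estimates. Your treatment of the corner compatibility is in fact more explicit than the paper's (which simply asserts that the reverse inclusion ``is easily seen from the structure of the spaces''); the only slip is the phrase ``at most two layers''---the second-layer basis functions have support extending over up to $p+1$ knot spans, so $v$ is supported in a strip of $O(p)$ elements along $\Gammaref_c$, but this does not affect the scaling argument.
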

\begin{proof}
Denote $\Shearshtilde $ the space of fields that fulfill the
characterization on the right hand side of
(\ref{eq:shear-characterization}).   Let $ \Shearshref  \ni \esseb  = \etab^{\esseb} - \nabla v^{\esseb}$.
 Then,  from (\ref{eq:derivative-of-splines}),  we easily
  see that $ \nabla v^{\esseb}$ belongs to $
  S^{p-1,p}_{\alpha-1,\alpha} (\Omegaref_h) \times
    S^{p,p-1}_{\alpha,\alpha-1}(\Omegaref_h)    $,   and the
    boundary conditions that define
  (\ref{eq:W-parametric})--(\ref{eq:Shears-parametric}) implies that
    $\etab^{\esseb} - \nabla v^{\esseb}$ fulfills the homogeneous
    boundary conditions stated in  (\ref{eq:shear-characterization}). Thus $\Shearshref  \subseteq
    \Shearshtilde $.

In order to prove the inclusion $\Shearshref  \supseteq
    \Shearshtilde $, notice that the fields in $  \Shearshtilde $
    which vanish on $\Gammaref_c$ belong to $\Rotshref$, therefore we
    need to show that the B-spline  basis functions of  $
    \Shearshtilde $ whose support intersect  $\Gammaref_c$ belong to
    $\nabla \Whref$. This is again easily seen from the structure of
    the spaces.

Let again $ \Shearshref  \ni \esseb  = \etab^{\esseb} - \nabla
v^{\esseb}$. Define $v \in \Whref$ such that
\begin{equation}
  \label{eq:splitting-proof-1}
    \frac{\partial v}{\partial \nn} = - \esseb  \cdot \nn \text{ on }
    \Gammaref_c
\end{equation}
and such that  the B-spline coefficients of $v$ not involved in
(\ref{eq:splitting-proof-1}) are set to zero. Because of the locality
of the B-spline basis functions, $ v$ is supported in a  strip of
element associated with the $p+1$ knot spans around  $\Gammaref_c
$. By construction,  the norm of $v$ is bounded by the norm of
(\ref{eq:splitting-proof-1}), then, by a  scaling argument  and  inverse estimate,
\begin{equation}
  \label{eq:splitting-proof-2}
  \begin{aligned}
  \| \nabla v \| _{L^2(\Omegaref)} & \leq C h^{1/2}   \left \|   \frac{\partial
    v}{\partial \nn}  \right \| _{L^2(\Gammaref_c)}\\  & = C h^{1/2}
\left \| \esseb  \cdot \nn  \right \| _{L^2(\Gammaref_c)}  \\  &
\le C \left \| \esseb   \right \| _{L^2(\Omegaref)}.
  \end{aligned}
\end{equation}

Since $\esseb + \nabla v $ vanishes on  $\Gammaref_c$, we can set
\begin{equation*}
%  \label{eq:splitting-proof-3}
  \etab = \esseb + \nabla v \in \Rotshref,
\end{equation*}
and, using again an inverse estimate and (\ref{eq:splitting-proof-2}),  the following estimate holds
\begin{equation}
  \label{eq:splitting-proof-4}
  \begin{aligned}
  \|\etab  \| _{H^1(\Omegaref)} & \leq C h^{-1}    \|\etab  \|
  _{L^2(\Omegaref)}  \\  & = C h^{-1} \left ( \| \esseb  \|
  _{L^2(\Omegaref)}  + \| \nabla v    \|
  _{L^2(\Omegaref)}   \right ) \\  & \leq  C h^{-1} \| \esseb  \|
  _{L^2(\Omegaref)} .
  \end{aligned}
\end{equation}
The property (\ref{eq:2}) follows from (\ref{eq:splitting-proof-2})
and (\ref{eq:splitting-proof-4}).
\end{proof}

\subsection{Spline spaces on the physical domain}\label{sec:splinespacephysdom}
%------------------------------------------------------------------------------------------
Once the finite dimensional spaces $\Whref$, $\Rotshref$ and
$\Shearshref $ on the parametric domain $\Omegaref$ have been defined, we construct the corresponding spaces
$\Wh$, $\Rotsh$ and $\Shearsh$ in the physical domain $\Omega$.

The deflections space $\Wh$ is mapped from the reference domain via the
geometrical parametrization $\bF : \Omegaref \longrightarrow  \Omega  $, that is
\begin{equation*}
%  \label{eq:pressure-mapping}
     \Wh  =  \{ \hat v \circ \bF^{-1}: \hat v  \in \Whref  (p,\alpha )\}.
\end{equation*}
For the rotations and shears  spaces we use the covariant map:

\begin{equation*}
%  \label{eq:piola-kind-mapping-rots}
  \begin{aligned}
     \Rotsh &=\left \{ D\bF^{-T}  \hat \bv  \circ \bF^{-1}:  \hat \bv\in  \Rotshref  \right \},\\
     \Shearsh &=\left \{ D\bF^{-T}  \hat  \bv \circ \bF^{-1} : \hat  \bv \in  \Shearshref   \right \},
 \end{aligned}
\end{equation*}
 where $D\bF^{-T}  = (D\bF^{-1})^T$ is the transpose of the gradient
 of the inverse of the geometrical map $\bF$. 
The push-forward by the covariant map has two main properties: \emph{i)} it preserves the
nullity  of tangential components,  
\emph{ii)} it maps gradient to gradients, i.e., by the chain rule, we have $\nabla w = D\bF^{-T} \widehat{\nabla} \hat w \circ \bF^{-1} $ when $w=\hat w \circ \bF^{-1}$.
Thus, we have that $\bv\cdot\tt =0$ on $\Gamma_{sh} $ for all $\bv\in \Rotsh$ and, recalling Lemma \ref{lem:XX}, the following holds:
\begin{lemma}
  We have
  \begin{equation*}
%    \label{eq:Shears-physical-domain}
    \Shearsh = \nabla  \Wh  +   \Rotsh,
  \end{equation*}
and, for  $\esseb \in  \Shearsh $, there exist $ v \in \Wh$,
$\etab \in \Rotsh$, such that
\begin{equation}
  \label{eq:Shears-physical-domain-decomposition}
  \begin{aligned}
    \etab - \nabla v & =\esseb ,\\ \| \etab\| _{H^1(\Omega)} + \| v
    \| _{H^1(\Omega)}   &    \leq C h^{-1} \| \esseb\| _{L^2(\Omega)} .
  \end{aligned}
\end{equation}
\end{lemma}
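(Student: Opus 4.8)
The plan is to reduce both assertions to the already-established parametric Lemma~\ref{lem:XX} by transporting everything through the geometrical map $\bF$, using the two properties of the covariant push-forward recalled just above the statement: it preserves nullity of tangential components and it maps gradients to gradients.

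First I would prove the identity $\Shearsh = \nabla\Wh + \Rotsh$. By definition $\Shearsh$ is the covariant push-forward of $\Shearshref = \nabla\Whref + \Rotshref$, and for any $v = \hat v\circ\bF^{-1}\in\Wh$ the chain rule gives $\nabla v = D\bF^{-T}\,\widehat{\nabla}\hat v\circ\bF^{-1}$; hence $\nabla\Wh$ and $\Rotsh$ are exactly the covariant push-forwards of $\widehat{\nabla}\Whref$ and $\Rotshref$. Since both $\widehat{\nabla}\Whref$ and $\Rotshref$ are contained in $\Shearshref$, we get $\nabla\Wh + \Rotsh\subseteq\Shearsh$; conversely, writing a generic $\esseb=D\bF^{-T}\hat\esseb\circ\bF^{-1}\in\Shearsh$ with $\hat\esseb=\widehat{\nabla}\hat v+\hat\etab$, $\hat v\in\Whref$, $\hat\etab\in\Rotshref$, the gradient-to-gradient property turns the first term into $\nabla(\hat v\circ\bF^{-1})\in\nabla\Wh$ and the second into an element of $\Rotsh$. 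This gives the equality.

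For the stable decomposition \eqref{eq:Shears-physical-domain-decomposition}, given $\esseb\in\Shearsh$ I would write $\esseb=D\bF^{-T}\hat\esseb\circ\bF^{-1}$ with $\hat\esseb\in\Shearshref$, apply Lemma~\ref{lem:XX} to obtain $\hat v\in\Whref$, $\hat\etab\in\Rotshref$ with $\hat\etab-\widehat{\nabla}\hat v=\hat\esseb$ and $\|\hat\etab\|_{H^1(\Omegaref)}+\|\hat v\|_{H^1(\Omegaref)}\le C h^{-1}\|\hat\esseb\|_{L^2(\Omegaref)}$, and then set $v:=\hat v\circ\bF^{-1}\in\Wh$, $\etab:=D\bF^{-T}\hat\etab\circ\bF^{-1}\in\Rotsh$ (membership in $\Wh$, $\Rotsh$ being built into their definitions). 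The gradient-to-gradient property yields $\etab-\nabla v=D\bF^{-T}(\hat\etab-\widehat{\nabla}\hat v)\circ\bF^{-1}=\esseb$, so only the norm bound remains to be transferred.

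To close, I would invoke that $\bF\in(\cC^\infty_{\alpha_1,\alpha_2}(\Omegaref_{h}))^2$ has a piecewise smooth inverse and is independent of the mesh index $h$, so that $D\bF$, $D\bF^{-1}$ and the second derivatives of $\bF$ are bounded in $L^\infty$ by constants depending only on $\bF$ (each refined mesh $\Omegaref_h$ being a refinement of the fixed coarse mesh on which $\bF$ is piecewise $C^\infty$). A change of variables together with the chain and product rules then shows that $\|\esseb\|_{L^2(\Omega)}\simeq\|\hat\esseb\|_{L^2(\Omegaref)}$, $\|v\|_{H^1(\Omega)}\simeq\|\hat v\|_{H^1(\Omegaref)}$ and $\|\etab\|_{H^1(\Omega)}\simeq\|\hat\etab\|_{H^1(\Omegaref)}$ with $h$-independent constants; chaining these with the parametric estimate gives \eqref{eq:Shears-physical-domain-decomposition}. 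The only delicate point — and the place where the hypotheses on $\bF$ and the quasi-uniformity \eqref{quasiunif-phys} genuinely enter — is exactly this last step: verifying that the transformation constants do not depend on $h$, so that the factor $h^{-1}$ produced by Lemma~\ref{lem:XX} survives unchanged after the push-forward.
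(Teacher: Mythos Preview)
Your proposal is correct and is precisely the argument the paper intends: the paper does not give a proof of this lemma at all, but merely states that it follows by ``recalling Lemma~\ref{lem:XX}'' together with the two properties of the covariant push-forward (preservation of tangential nullity and gradients to gradients). Your write-up is exactly the detailed version of that reduction, including the $h$-independent norm equivalences under $\bF$ that the paper leaves implicit.
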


  % \begin{proof}
%     We need to show that the space of gradients $\nabla \Whref$ on the reference domain $\Omegaref$ is
%     transformed     into  $\nabla \Wh$ on the physical domain $\Omega$, by
%     (\ref{eq:piola-kind-mapping-rots}). This is just  a
%     straightforward application of the chain rule: starting from
%     (\ref{eq:pressure-mapping}) we have
%     \begin{equation}\label{eq:Shears-physical-domain-proof}
%       v = \hat v \circ \bF^{-1},
%     \end{equation}
% with $v \in \Vh$,  $\hat v \in \Vhref$; deriving
% (\ref{eq:Shears-physical-domain-proof}) we immediately obtain
% \begin{displaymath}
%   \nabla v = D\bF^{-T}   ( \nabla \hat v )\circ \bF^{-1}.
% \end{displaymath}
% Then, the decomposition
% (\ref{eq:Shears-physical-domain-decomposition}) follows from
% (\ref{eq:2}) and change of variable.
%   \end{proof}

% The next lemma summarizes the approximation properties of the
% spaces $\Wh$, $\Rotsh$ and $\Shearsh$.

We now turn to approximation properties and we make use of the approximation results proved in \cite{BBCHS06} and adapted in \cite{BSV} for vector fields under covariant transformation. 

\begin{lemma}\label{lemma:approximation-properties}
Let $1 \le  \alpha \leq p+1 $, and let $\bF \in (\cC^\infty
_{\alpha,\alpha} (\Omegaref_{h}))^2$.   There exist projectors $\Pi_{\Wh}: \W_{0} \rightarrow  \Wh$,
  $\Pi_{\Rotsh}: \Rots_{0} \rightarrow  \Rotsh $ and
  $\Pi_{\Shearsh}: \Shears \rightarrow  \Shearsh $ such that,
  for all $1 < s \leq p+1$
  \begin{equation*}
%    \label{eq:error-estimate-W}
    \begin{aligned}
      \| v-\Pi_{\Wh}(v)\|_{ H^1(K) }  &\leq C h^{s-1} | v
      |_{H^{s}(\tilde K) }, \qquad \forall K \in
       {\Omega}_h,\forall  v \in \W_{0} \cap
      H^{s}(\tilde K) ,
    \end{aligned}
  \end{equation*}
for all $1 \le s \leq p$
  \begin{equation*}
%    \label{eq:error-estimate-Rots}
    \begin{aligned}
    \| \etab-\Pi_{\Rotsh}(\etab)\|_{ H^1(K) }  &\leq C h^{s-1} |
    \etab |_{H^{s}(\tilde K) }, \qquad \forall K \in
       {\Omega}_h,\forall \etab \in \Rots_{0} \cap
    (H^{s}(\tilde K))^2
    \end{aligned}
  \end{equation*}
for all $1 < s \leq p$
  \begin{equation*}
%    \label{eq:error-estimate-Shears}
    \begin{aligned}
   \| \esseb-\Pi_{\Shearsh}(\esseb)\|_{ L^2(K) }  &\leq C
   h^{s} | \esseb |_{H^{s}(\tilde K)}, \qquad \forall K \in
       {\Omega}_h, \forall \esseb   \in  \Shears^s,
    \end{aligned}
  \end{equation*}
where
$$
\Shears^s = \left (\Rots_{0} +  \nabla \W _{0}\right ) \cap  (H^{s}(\tilde
K))^2. 
$$

\end{lemma}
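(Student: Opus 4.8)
\emph{Sketch of the proof.} The plan is to carry to $\Omega_h$ the spline approximation estimates already available on the parametric square $\Omegaref$ --- the scalar ones of \cite{BBCHS06} and their covariant, De~Rham compatible versions of \cite{BSV} --- by means of the geometric map $\bF$. Accordingly, I would define the three projectors by pull-back / push-forward: for $w\in\W_0$ set $\Pi_{\Wh}(w):=\big(\Pi_{\Whref}(w\circ\bF)\big)\circ\bF^{-1}$, for $\etab\in\Rots_0$ set
\[
\Pi_{\Rotsh}(\etab):=D\bF^{-T}\,\Big(\Pi_{\Rotshref}\big((D\bF)^{T}(\etab\circ\bF)\big)\Big)\circ\bF^{-1},
\]
and define $\Pi_{\Shearsh}$ in the same covariant way through a quasi-interpolant $\Pi_{\Shearshref}$ onto $\Shearshref$; here $\Pi_{\Whref},\Pi_{\Rotshref},\Pi_{\Shearshref}$ are the quasi-interpolation operators of \cite{BBCHS06,BSV} onto $\Whref,\Rotshref,\Shearshref$. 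Since these are projectors on $\Omegaref$ and the (covariant) pull-back is a bijection between the parametric and the physical discrete spaces, the operators above are genuine projectors onto $\Wh,\Rotsh,\Shearsh$.

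Two properties of the parametric operators are needed. First, being built from dual functionals supported in a bounded number of knot spans, they preserve the essential boundary conditions in \eqref{eq:W-parametric}--\eqref{eq:Shears-parametric}; for the shear operator one invokes in addition Lemma~\ref{lem:XX}. Indeed, for $\esseb\in\Shears^{s}$ the parametric pull-back $(D\bF)^{T}(\esseb\circ\bF)$ has vanishing tangential trace on $\Gammaref_{sh}\cup\Gammaref_c$ and vanishes at every corner of $\overline{\Gammaref}_{ss}\cap\overline{\Gammaref}_c$ (these follow by writing $\esseb=\etab+\nabla v$ with $\etab\in\Rots_0$, $v\in\W_0$, since two non-parallel edges through a corner force $\nabla v$ to vanish there), so locality of $\Pi_{\Shearshref}$ guarantees that its image still satisfies \eqref{eq:shear-characterization}, i.e. it belongs to $\Shearshref=\nabla\Whref+\Rotshref$ --- consistently with the commuting relation $\widehat\nabla\,\Pi_{\Whref}=\Pi_{\Shearshref}\,\widehat\nabla$ of the De~Rham diagram of \cite{BSV}. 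Second, by \cite{BBCHS06,BSV} and the quasi-uniformity \eqref{quasiunif} ensured by Assumption~\ref{ass1}, one has the local parametric estimates on $\widehat K:=\bF^{-1}(K)$ and its support extension $\widetilde{\widehat K}$: of order $h^{s-1}$ in the $H^1$ seminorm for the deflections ($1<s\le p+1$, degree $p$), of order $h^{s-1}$ in $H^1$ for the rotations ($1\le s\le p$, the smaller degree present in $\RotshrefF$ being $p-1$), and of order $h^{s}$ in $L^2$ for the shears ($1<s\le p$, reflecting that $\Shears=[L^2(\Omega)]^2$); the class $\Shears^{s}=\big(\Rots_0+\nabla\W_0\big)\cap(H^{s}(\tilde K))^2$ is exactly the one for which the last estimate is meaningful.

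It then remains to transfer the bounds to $\Omega_h$. Since $\bF\in(\cC^\infty_{\alpha,\alpha}(\Omegaref_h))^{2}$ has piecewise smooth inverse and is independent of $h$, the change of variables induced by $\bF$ yields, element by element, the norm equivalences $|w\circ\bF^{-1}|_{H^{\sigma}(K)}\simeq|w|_{H^{\sigma}(\widehat K)}$ for scalar functions and, after differentiating $D\bF^{-T}\hat\bv\circ\bF^{-1}$ and using that all derivatives of $D\bF$ and $D\bF^{-1}$ are piecewise bounded, $\|D\bF^{-T}\hat\bv\circ\bF^{-1}\|_{H^{\sigma}(K)}\simeq\|\hat\bv\|_{H^{\sigma}(\widehat K)}$ for $\sigma=0,1$ for the covariant push-forward, with constants depending only on $\bF$ and $\sigma$. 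Combining these equivalences with the three parametric estimates, using \eqref{quasiunif-phys} to exchange $h$ with $h_K$, and setting $\tilde K:=\bF(\widetilde{\widehat K})$, one obtains the claimed estimates.

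The main obstacle is the construction of $\Pi_{\Shearshref}$: we must exhibit a projector onto the \emph{constrained} space $\Shearshref$ --- not merely onto $\RotshrefF$ --- that is simultaneously $L^2$-stable, locally $H^{s}$-to-$L^2$ accurate, and compatible with the splitting $\Shearshref=\nabla\Whref+\Rotshref$. This is precisely what the commuting De~Rham projector of \cite{BSV}, read through the boundary/corner characterization of Lemma~\ref{lem:XX}, provides; the only genuinely new verification is that the non-standard boundary partition $\Gamma=\Gamma_f\cup\Gamma_s\cup\Gamma_c$ with $\Gamma_s=\Gamma_{ss}\cup\Gamma_{sh}$ does not spoil the locality or the boundary-condition preservation of the underlying operators --- after which the whole argument is a routine application of \cite{BBCHS06,BSV}.
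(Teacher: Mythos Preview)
Your approach is correct and matches the paper's: both simply invoke the quasi-interpolants of \cite{BBCHS06} for the scalar space and of \cite{BSV} for the vector spaces (transported by the covariant map), imposing the appropriate homogeneous boundary conditions by zeroing the relevant coefficients, with $\Pi_{\Shearsh}$ being the same operator as $\Pi_{\Rotsh}$ but with the boundary constraints of \eqref{eq:shear-characterization}. One small caveat: your parenthetical appeal to a commuting relation $\widehat\nabla\,\Pi_{\Whref}=\Pi_{\Shearshref}\,\widehat\nabla$ ``of \cite{BSV}'' is not needed for the argument and, in the paper's setting, commuting projectors are attributed to \cite{BRSV} and are only available under Assumption~\ref{ass:2} (see Remark~\ref{ref:diag:phis}); your actual verification that $\Pi_{\Shearshref}(\esseb)\in\Shearshref$ via Lemma~\ref{lem:XX} is the right one and does not rely on commutation.
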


\begin{proof}
The projector $\Pi_{\Wh}$ can be chosen as the quasi-interpolant proposed in  \cite{BBCHS06} and  homogeneuous boundary condition are imposed by setting to zero che coefficient of all the basis functions which are non-zero on $\Gamma_c$ and $\Gamma_s$. 
The projector $\Pi_{\Rotsh}$ is the one constructed in \cite{BSV}, with boundary condition set only on $\Gamma_c$ and $\Gamma_{sh}$;  and $\Pi_{\Shearsh}$ is the same with the set of boundary condition corresponding to \eqref{eq:shear-characterization}. 
\end{proof}

\begin{rem}\label{ref:diag:phis} If $\: \Gamma_{c}
=\emptyset$ and $\Gamma_{ss} = \emptyset$, then $\nabla \Wh \subseteq \Rotsh =
\Shearsh $. In this case, we can choose as interpolation operators the commuting projectors 
that are constructed in \cite{BRSV}: i.e., there are projectors $P_{\Wh}$ and $P_{\Rotsh}$ such that 
  \begin{equation*}
%    \label{eq:commuting-property-phis}
    P_{\Rotsh} \nabla  = \nabla  P_{\Wh}.
  \end{equation*}
We refer to \cite{BRSV, BSV} for the details.
\end{rem}

%---------------------------------------------------------------------------
\subsection{The discrete problem} 
%---------------------------------------------------------------------------

We can now introduce our proposed method. The Galerkin formulation for the approximation of Problem~\eqref{P}, reads
\begin{equation}\label{Ph}
\left\{
\begin{aligned}
& \text{Find  $(\rots_h,\w_h) \in \Rots_{h,0}\times \W_{h,0}$  such  that} \\
& a(\rots_h,\etab_h)+ t^{-2} (\rots_h - \nabla \w_h,
\etab_h-\nabla v_h) = (f,v_h) \quad \forall (\etab_h,v_h) \in \Rots_{h,0}\times \W_{h,0} ;
\end{aligned}
\right.
\end{equation}
this is the discrete problem which is solved in our isogeometric
code. For the purpose of its stability and  convergence analysis, we introduce the discrete shear stress
$$
\gammab_h = t^{-2} (\rots_h - \nabla \w_h) \in \Shears_{h,0} ,
$$
and reformulate Problem \eqref{Ph} in the following mixed form
(analogous to Problem~\eqref{Pmixed}):

\begin{equation}\label{Pmixedh}
\left\{
\begin{aligned}
& \text{Find  $\rots_h \w_h,\gammab_h)\in \Rots_{h,0} \times \W_{h,0}\times \Shears_{h,0}$ such  that} \\
& a(\rots_h,\etab_h)+ (\shears_h, \etab_h-\nabla v_h) = (f,v_h)
\quad \forall
\etab_h \in \Rots_{h,0}, v_h \in \W_{h,0} \\
&(\rots_h - \nabla \w_h,\esseb_h)- t^2 (\shears_h,\esseb_h)=0
\quad \forall \esseb_h \in \Shears_{h,0} \ .
\end{aligned}
\right.
\end{equation}

Introducing, for all $(\betab,u,\taub)$ and $(\etab,v,\esseb)$ in
$\Rots_{0}\times\W_{0}\times\Shears$, the symmetric bilinear form
\begin{equation}\label{bil}
\bil(\betab,u,\taub;\etab,v,\esseb) = a(\betab,\etab) + (\taub,
\etab-\nabla v) + (\betab - \nabla u,\esseb) - t^2 (\taub,\esseb)
,
\end{equation}
the continuous and discrete problems can be also written as
\begin{equation}\label{Pb}
\bil(\rots,\w,\shears;\etab,v,\esseb) = (f,v) \quad \forall
(\etab,v,\esseb) \in \Rots_{0}\times\W_{0}\times\Shears\ ,
\end{equation}
and
\begin{equation}\label{Pbh}
\bil(\rots_h,\w_h,\shears_h;\etab_h,v_h,\esseb_h) = (f,v_h) \quad
\forall (\etab_h,v_h,\esseb_h) \in
\Rots_{h,0}\times\W_{h,0}\times\Shears_{h,0} \ .
\end{equation}

Note that, differently from what happens with most finite element methods (e.g. the well-known MITC elements, cf.~\cite{brebaf, BFS91}), if one
considers the limit case $t=0$ in formulation \eqref{Pmixedh}, the
second equation gives exactly $\rots_h = \nabla \w_h$. Therefore,
substituting such identity in the first equation of
\eqref{Pmixedh}, one gets the problem
\begin{equation*}%\label{Kirchhoff}
\left\{
\begin{aligned}
& \text{Find  $\w_h \in
\W_{h,0}$  such  that} \\
& a(\nabla \w_h,\nabla v_h) = (f,v_h) \quad \forall v_h \in \W_{h,0} \
,
\end{aligned}
\right.
\end{equation*}
i.e. the Kirchhoff plate bending problem.

%---------------------------------------------------------------------------
\section{Convergence analysis}\label{sec:conv}
%---------------------------------------------------------------------------

In this section we investigate the convergence properties of the
method proposed in Section~\ref{sec:disc-spaces}.  

Under the assumption  $\Gamma_c=\Gamma_{ss}=0$, we present in
Section~\ref{sec:simple-conv-analys} an analysis based only upon
coercivity. A similar approach is proposed by  Duran and Libermann in
\cite{DL92}, but things are straightforard in our case, thanks to the
commuting projectors that are at our disposal (see Remark
\ref{ref:diag:phis}).

On the other hand, for general boundary conditions, 
we need a more delicate analysis which is the object of  Section \ref{sec:discr-norm}. 

\subsection{Coercivity-based convergence analysis}
\label{sec:simple-conv-analys}

We assume in this section:
\begin{assum}
\label{ass:2}  It holds
$\Gamma_{c} =\emptyset$ and $\Gamma_{ss} = \emptyset$.
\end{assum}

We recall that, under the above assumption, as noted in
Remark \ref{ref:diag:phis} the following properties hold
\begin{equation*}
%    \label{eq:commuting-property-phis-2}
    \Rotsh = \Shearsh \ , \quad
    P_{\Rotsh} \nabla  = \nabla  P_{\Wh}.
\end{equation*}

The following lemma holds.

\begin{lemma}\label{dl-lemma}
Let $(\rots,\w)$ be the solution of Problem \eqref{P} and
$(\rots_h,\w_h)$ be the solution of Problem \eqref{Ph}. Let the
shears $\shears=t^{-2}(\rots-\nabla\w)$ and
$\shears_h=t^{-2}(\rots_h-\nabla\w_h)$.  
Then for $1 < s \le p$ it holds 
\begin{equation}\label{dl-errest}
|| \rots-\rots_h ||_{H^1(\Omega)} +  ||w-w_h  ||_{H^1(\Omega)} + t
||\shears-\shears_h ||_{L^2(\Omega)}
 \le C h^{s-1} (  || \rots ||_{H^{s} (\Omega)} +  t ||\shears ||_{H^{s-1}(\Omega)} ) \ ,
\end{equation}
where the constant $C$ is independent of $h,t$.
\end{lemma}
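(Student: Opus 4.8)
The plan is to exploit the special structure available under Assumption~\ref{ass:2}: namely $\Rotsh=\Shearsh$ and the existence of commuting projectors $P_{\Wh},P_{\Rotsh}$ with $P_{\Rotsh}\nabla = \nabla P_{\Wh}$. The first step is to observe that Problem~\eqref{Ph} is coercive on $\Rotsh\times\Wh$ with respect to the norm $\|\etab\|_{H^1}^2 + \|v-v_0\|_{H^1}^2$ (where $v_0$ accounts for the kernel), uniformly in $t$: indeed $a(\etab,\etab)$ controls $\|\etab\|_{H^1}^2$ by Korn plus the boundary conditions, and once $\etab$ is controlled, the term $t^{-2}\|\etab-\nabla v\|_{L^2}^2$ together with $\|\etab\|_{H^1}$ controls $\|\nabla v\|_{L^2}$, hence $\|v\|_{H^1}$ by Poincaré. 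So the discrete problem has a unique solution and Céa-type quasi-optimality holds in the energy norm $\|\etab\|_{H^1}^2 + \|v\|_{H^1}^2 + t^{-2}\|\etab-\nabla v\|_{L^2}^2$.

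Next I would build a good discrete comparison function. Set $\betab_h = P_{\Rotsh}\rots$ and $u_h = P_{\Wh}\w$. The crucial point is that, because $P_{\Rotsh}\nabla = \nabla P_{\Wh}$,
\begin{equation*}
\betab_h - \nabla u_h = P_{\Rotsh}\rots - \nabla P_{\Wh}\w = P_{\Rotsh}(\rots-\nabla\w) = t^2\,P_{\Rotsh}\shears,
\end{equation*}
so the discrete shear $t^{-2}(\betab_h-\nabla u_h) = P_{\Rotsh}\shears$ is itself the projection of $\shears$, and the troublesome factor $t^{-2}$ cancels against the $t^2$ coming from $\rots-\nabla\w$. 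Then I estimate the consistency error: using coercivity and Galerkin orthogonality in the form $\bil(\rots-\rots_h,\w-\w_h,\shears-\shears_h;\cdot)=0$, one reduces $\|\rots-\rots_h\|_{H^1}+\|\w-\w_h\|_{H^1}+t\|\shears-\shears_h\|_{L^2}$ to the approximation errors $\|\rots-\betab_h\|_{H^1}$, $\|\w-u_h\|_{H^1}$, and $t\|\shears - P_{\Rotsh}\shears\|_{L^2}$ (the last being exactly $t$ times the $\Shearsh$-approximation error applied to $\shears$, noting $\shears\in\Rots_0+\nabla\W_0$). Invoking Lemma~\ref{lemma:approximation-properties} with the stated ranges of $s$ gives $\|\rots-\betab_h\|_{H^1}\le Ch^{s-1}|\rots|_{H^s}$ for $1\le s\le p$, and $t\|\shears-P_{\Rotsh}\shears\|_{L^2}\le Ch^{s}\,t|\shears|_{H^s}$ for the shear part; one also needs $\|\w-u_h\|_{H^1}\le Ch^{s-1}|\w|_{H^s}$, which is then absorbed using the relation $\nabla\w = \rots - t^2\shears$ so that $|\w|_{H^s}$ is bounded by $|\rots|_{H^{s-1}}+t^2|\shears|_{H^{s-1}}\lesssim |\rots|_{H^{s}}+t|\shears|_{H^{s-1}}$, matching the right-hand side of \eqref{dl-errest}. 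Summing the contributions over elements $K$ (using quasi-uniformity \eqref{quasiunif-phys}) yields the claimed global bound with $C$ independent of $h$ and $t$.

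The main obstacle is the bookkeeping of the $t$-weights so that the $t^{-2}$ in the energy never produces a $t$-dependent constant: this is exactly where the commuting-diagram identity $\betab_h-\nabla u_h = t^2 P_{\Rotsh}\shears$ is indispensable, and it is why the argument works ``straightforwardly'' here whereas for MITC-type elements one needs a genuine inf–sup analysis. A secondary technical point is matching the regularity indices in Lemma~\ref{lemma:approximation-properties} — the deflection $\w$ gets approximated at order $h^{s-1}$ in $H^1$ needing $\w\in H^s$ with $s\le p+1$, while $\rots$ only allows $s\le p$; since $\nabla\w$ and $\rots$ differ by $t^2\shears$, it suffices to track $\rots$ and $\shears$ as on the right-hand side, so no extra regularity on $\w$ is really required. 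Everything else — Korn's inequality, Poincaré, and the elementwise-to-global summation — is routine.
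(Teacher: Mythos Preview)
Your proposal is correct and follows essentially the same route as the paper: both arguments rest on coercivity of the primal formulation together with the commuting-projector identity $P_{\Rotsh}\nabla=\nabla P_{\Wh}$, which turns the discrete interpolated shear into $P_{\Rotsh}\shears$ and neutralises the $t^{-2}$ factor. The only cosmetic difference is that the paper bounds $\|\w-\w_h\|_{H^1}$ a posteriori from the algebraic relation $\nabla(\w-\w_h)=(\rots-\rots_h)-t^2(\shears-\shears_h)$ rather than carrying $\|\w-u_h\|_{H^1}$ through a C\'ea step, and it uses the $L^2$ shear estimate at order $h^{s-1}$ with $H^{s-1}$ regularity (not $h^{s}$ with $H^{s}$) to match the right-hand side of \eqref{dl-errest}.
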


{\it Proof.}  From~\eqref{P} and~\eqref{Ph}, we obtain

\begin{equation*}%\label{dl-1}
a(\rots-\rots_h,\etab_h)+ (\shears-\shears_h, \etab_h-\nabla v_h)
= 0 \quad \forall \etab_h \in \Rots_h, v_h \in \W_h .
\end{equation*}

Therefore, for every $(\rots_I,\w_I)\in\Rots_{0,h}\times\W_{0,h}$, and
$\shears_I := t^{-2}(\rots_I-\nabla\w_I)\in\Shears_{0,h}$, we infer

\begin{equation}\label{dl-2}
\begin{aligned}
a(\rots_I-\rots_h,\etab_h)  + (\shears_I  &  -\shears_h ,
\etab_h-\nabla v_h) =  a(\rots_I-\rots,\etab_h)\\
& + (\shears_I-\shears, \etab_h-\nabla v_h) \qquad \forall \etab_h
\in \Rots_h, v_h \in \W_h .
\end{aligned}
\end{equation}

Choosing $\etab_h = \rots_I-\rots_h$ and $v_h =w_I- w_h$, we get

$$
t^2(\shears_I-\shears_h )= \etab_h-\nabla v_h.
$$

Therefore, from~\eqref{dl-2} we get

\begin{equation}\label{dl-3}
\begin{aligned}
a(\rots_I-\rots_h,\rots_I-\rots_h)  + t^2 (\shears_I   -\shears_h
,
\shears_I -\shears_h)  =  a(& \rots_I-\rots,\rots_I-\rots_h)\\
& + t^2 (\shears_I-\shears, \shears_I-\shears_h) .
\end{aligned}
\end{equation}
Using standard arguments, from~\eqref{dl-3} we obtain

\begin{equation*}%\label{dl-4}
|| \rots_I-\rots_h ||_{H^1(\Omega)} +  t ||\shears_I-\shears_h
||_{L^2(\Omega)}
 \le C (  || \rots_I-\rots ||_{H^1(\Omega)} +  t ||\shears_I-\shears ||_{L^2(\Omega)}) \ .
\end{equation*}
 We choose  now $\rots_I=P_{\Rotsh} \rots$ and
$w_I=P_{\Wh} w$, where $P_{\Rotsh}$ and $P_{\Wh} w$ are the
interpolation operators introduced in Section
\ref{sec:splinespacephysdom}. By the choice of $\shears_I$, we obtain:

\begin{equation*}%\label{dl-5}
\shears_I := t^{-2}(\rots_I-\nabla\w_I) = t^{-2}P_{\Rotsh}
(\rots - \nabla\w) = P_{\Rotsh} \shears .
\end{equation*}

Therefore, also using Lemma \ref{lemma:approximation-properties},
we have

\begin{equation}\label{dl-6}
 t ||\shears_I-\shears ||_{L^2(\Omega)} = t ||P_{\Rotsh}\shears-\shears ||_{L^2(\Omega)} \le C h^{s-1} t || \shears||_{H^{s-1}(\Omega)} .
\end{equation}

Furthermore, again due to Lemma
\ref{lemma:approximation-properties}, it holds

\begin{equation}\label{dl-7}
||\rots_I - \rots||_{H^1(\Omega)} = ||   P_{\Rotsh}\rots - \rots
||_{H^1(\Omega)}\le C h^{s-1} || \rots||_{H^{s-1}(\Omega)} .
\end{equation}

Using the triangle inequality and~\eqref{dl-6}-\eqref{dl-7}, from
Lemma~\ref{dl-lemma} we get

\begin{equation}\label{dl-8}
|| \rots-\rots_h ||_{H^1(\Omega)} +  t ||\shears-\shears_h
||_{L^2(\Omega)}
 \le C h^{s-1} (  || \rots ||_{H^{s}(\Omega)}  +  t ||\shears ||_{H^{s-1}(\Omega)} ) \ .
\end{equation}

To get the error estimate on the deflections, we simply notice
that it holds

\begin{equation}\label{dl-9}
\nabla(w-w_h) =(\rots-\rots_h) - t^2(\shears-\shears_h) .
\end{equation}

Estimate~\eqref{dl-errest} now easily follows
from~\eqref{dl-8}-\eqref{dl-9}.

\fin

% \begin{rem}
% We notice that, contrary to the analysis developed in
% Section~\ref{sec:conv}, the present error analysis does not assume
% the quasi-uniformity condition on the meshes. However, error
% estimate~\eqref{dl-errest} is weaker than the one shown in
% Corollary~\ref{prop:conv-rate}. In fact, in~\eqref{dl-errest} we
% control only $t || \shears - \shears_h||_{L^2(\Omega)}$, while
% Corollary~\ref{prop:conv-rate} provides an estimate also on $h ||
% \shears - \shears_h||_{L^2(\Omega)}$ (which may be considered as a
% sort of \emph{discrete} $H^{-1}$ norm).

% Finally, we remark that the arguments of
% Section~\ref{sec:improved} apply without any quasi-uniformity
% assumptions. As a consequence, the improved error estimate on the
% deflections  (cf. Lemma~\ref{lemma:deflH1}) still holds.
% \end{rem}

%---------------------------------------------------------------------------
\subsection{Convergence in a discrete norm} 
\label{sec:discr-norm}
%---------------------------------------------------------------------------

In this section we suppose that general boundary conditions are set and thus 
Assumption \ref{ass:2} does not hold. On the other hand, we remind that \eqref{quasiunif-phys} holds true and this is 
crucial in the subsequent analysis. 

We will make use of the following discrete norm~\cite{chasten}: 
\begin{equation}\label{norms}
\begin{aligned}
& ||| \etab, v |||_{\Rots}^2 = || \etab ||_{H^1(\Omega)}^2 +
\sum_{K\in\Omega_h}
\frac{1}{t^2+h_K^2} || \nabla v - \etab ||_{L^2(K)}^2 \\
& ||| \esseb |||_\Shears^2 = t^2 || \esseb ||_{L^2(\Omega)}^2 +
\sum_{K\in\Omega_h} h_K^2 || \esseb ||_{L^2(K)}^2 \\
& ||| \etab, v, \esseb |||^2 = ||| \etab, v |||_{\Rots}^2 + |||
\esseb |||_\Shears^2 \ ,
\end{aligned}
\end{equation}
for all $\etab\in\Rots_{0}$, $v\in\W_{0}$ and $\esseb\in \Shears$.
It is easy to check that it holds
$$
||| \etab, v |||_{\Rots}^2 \ge C || v ||_{H^1(\Omega)}^2 \quad
\forall \etab\in\Rots_{0}, \ v\in\W_{0} \ ,
$$
with $C$ independent of $h$ and $t$. Therefore, the norm $|||\cdot|||_{\Rots}$ also
includes the $H^1$ norm of both rotations and deflections.

% ------------------
% In the following, we assume that the mesh $\widehat{\Omega}_h$
% is shape regular, i.e. the ratio between the largest and the
% shortest edge of all elements $Q \in\widehat{\Omega}_h$ is
% uniformly bounded. We will present in Section \ref{sec:DLstyle} an alternative convergence analysis,
% involving different norms, that does not make use of the shape regularity condition.
% Notice that shape regularity  implies a quasi-uniformity condition
% in this context,  since $\widehat{\Omega}_h$ is a tensor product
% mesh. Due to the fact that the map $\F$ is fixed at the coarsest
% level of discretization and piecewise regular, the
% quasi-uniformity condition extends also to the physical mesh
% $\Omega_h$. We formalize such remark in the following.

% \begin{assum}\label{ass1}
% The parametric mesh $\widehat{\Omega}_h$ is shape regular. As a
% consequence, there exist positive constants $\cns$, $\cns'$,
% $\cns''$, fixed once and for all, such that
% \begin{equation}\label{quasiunif}
% \cns h \le h_Q \le h \ , \quad \cns' h \le h_K \le \cns'' h \qquad
% \forall Q \in \widehat{\Omega}_h, \ \forall K\in\Omega_h \ .
% \end{equation}
% \end{assum}

The following stability result can be shown adopting well-known
techniques, cf.~\cite{chasten}, combined with the results Section~\ref{sec:disc-spaces}. 
Therefore, the proof will be only sketched and several details will be omitted.

\begin{prop}\label{prop:stab}
For all $(\betab_h,u_h,\taub_h)\in\Rots_{0,h},\W_{0,h},\Shears_{0,h}$, there
exists $(\etab_h,v_h,\esseb_h))\in\Rots_{0,h},\W_{0,h},\Shears_{0,h}$ such that
\begin{eqnarray}
&& \bil(\betab_h,u_h,\taub_h;\etab_h,v_h,\esseb_h) \ge C_1
||| \betab_h, u_h, \taub_h |||^2 \ , \label{stab:a}\\
&& ||| \etab_h, v_h, \esseb_h ||| \le C_2 ||| \betab_h, u_h, \taub_h
||| \ , \label{stab:b}
\end{eqnarray}
with $C_1$ and $C_2$ positive constants independent of $h$ and $t$.
\end{prop}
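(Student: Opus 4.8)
The strategy is the classical Fortin-type construction for Reissner--Mindlin mixed methods (cf.~\cite{chasten}): given $(\betab_h,u_h,\taub_h)$ I will exhibit a test triple $(\etab_h,v_h,\esseb_h)$ as a linear combination of three elementary choices, tuned so that the ``diagonal'' terms $a(\betab_h,\etab_h)$, $-t^2(\taub_h,\esseb_h)$ and the least-squares terms $\sum_K (t^2+h_K^2)^{-1}\|\nabla u_h-\betab_h\|_{L^2(K)}^2$ are all recovered, while the off-diagonal contributions are absorbed by Young's inequality. The building blocks are: \emph{(i)} $(\etab_h^{(1)},v_h^{(1)},\esseb_h^{(1)})=(\betab_h,u_h,-\taub_h)$, which gives $\bil=a(\betab_h,\betab_h)+t^2\|\taub_h\|_{L^2}^2+2(\taub_h,\betab_h-\nabla u_h)$, controlling the $H^1$-seminorm of $\betab_h$ (via Korn, since $\Gamma_c\cup\Gamma_{sh}$ controls rigid motions) and the $t\|\taub_h\|_{L^2}$ part; \emph{(ii)} a choice targeting $\sum_K(t^2+h_K^2)^{-1}\|\nabla u_h-\betab_h\|_{L^2(K)}^2$: set $\esseb_h^{(2)}$ to be, elementwise, a multiple of $(\nabla u_h-\betab_h)$ scaled by $(t^2+h_K^2)^{-1}$ and then \emph{projected into} $\Shears_{h,0}$ (this is where the discrete spaces enter), with $\etab_h^{(2)}=v_h^{(2)}=0$, yielding $\bil=(\betab_h-\nabla u_h,\esseb_h^{(2)})-t^2(\taub_h,\esseb_h^{(2)})$; \emph{(iii)} a choice recovering $\sum_K h_K^2\|\taub_h\|_{L^2(K)}^2$: using Lemma~\ref{lem:XX} (or its physical-domain version, \eqref{eq:Shears-physical-domain-decomposition}), split a suitably scaled version of $\taub_h$ as $\etab_h^{(3)}-\nabla v_h^{(3)}$ with the inverse-estimate bound $\|\etab_h^{(3)}\|_{H^1}+\|v_h^{(3)}\|_{H^1}\le Ch^{-1}\|\cdot\|_{L^2}$, so that $\bil$ picks up $\|\taub_h\|$-type terms with the right $h_K$ weights.

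Concretely I would set $(\etab_h,v_h,\esseb_h)=(\etab_h^{(1)},v_h^{(1)},\esseb_h^{(1)})+\delta_2(\etab_h^{(2)},v_h^{(2)},\esseb_h^{(2)})+\delta_3(\etab_h^{(3)},v_h^{(3)},\esseb_h^{(3)})$ for small fixed constants $\delta_2,\delta_3>0$ chosen at the end. For block (ii), the key estimates are $\|\esseb_h^{(2)}\|_{L^2(K)}\le C(t^2+h_K^2)^{-1}\|\nabla u_h-\betab_h\|_{L^2(K)}$ (from the scaling and the $L^2$-stability of the projection onto $\Shears_{h,0}$, which follows from the structure of the spline spaces and quasi-uniformity \eqref{quasiunif-phys}) together with the approximation estimate controlling $\|\nabla u_h-\betab_h-\esseb_h^{(2)}\cdot(\text{weight})\|$; this produces the term $c\sum_K(t^2+h_K^2)^{-1}\|\nabla u_h-\betab_h\|_{L^2(K)}^2$ plus cross terms in $t\|\taub_h\|$ and $\|\betab_h\|_{H^1}$ absorbed by Young. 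For block (iii), applying \eqref{eq:Shears-physical-domain-decomposition} to $\esseb:=\sum_K h_K^2\,\Pi_K\taub_h$ (or directly to a scaled $\taub_h$, using that $\taub_h\in\Shears_{h,0}$), one gets $(\betab_h-\nabla u_h,\esseb_h^{(3)})+(\taub_h,\etab_h^{(3)}-\nabla v_h^{(3)})-t^2(\taub_h,\esseb_h^{(3)})$, where the middle term reproduces $\sum_K h_K^2\|\taub_h\|_{L^2(K)}^2$, and the inverse estimate $\|\etab_h^{(3)}\|_{H^1}+\|v_h^{(3)}\|_{H^1}\le Ch^{-1}\|\esseb_h^{(3)}\|_{L^2}\le Ch^{-1}\cdot h\|\taub_h\|\le C\|\taub_h\|$ keeps \eqref{stab:b} intact (the $h^{-1}$ and $h_K$ weights balance exactly).

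Adding the three contributions with the Young absorptions, one obtains $\bil(\betab_h,u_h,\taub_h;\etab_h,v_h,\esseb_h)\ge C_1|||\betab_h,u_h,\taub_h|||^2$, and \eqref{stab:b} follows from the bounds on each block together with $|||v_h|||_{\Rots}\ge C\|v_h\|_{H^1}$. The main obstacle, and the only place where the present isogeometric framework differs from the finite element argument of~\cite{chasten}, is block (iii): one needs the decomposition $\Shears_{h,0}=\nabla\Wh+\Rotsh$ together with the $h^{-1}$-stable splitting, which is exactly the content of Lemma~\ref{lem:XX} and \eqref{eq:Shears-physical-domain-decomposition} — so the proof reduces to invoking those results and carrying out the (routine but slightly delicate) weighted Young inequalities, which is why we only sketch it.
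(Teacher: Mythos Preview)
Your approach is essentially the paper's: the same three test triples, just with your (ii) and (iii) playing the roles of the paper's third and second choices, respectively. Two simplifications you should make. First, in block~(i) the cross terms cancel exactly, so $\bil(\betab_h,u_h,\taub_h;\betab_h,u_h,-\taub_h)=a(\betab_h,\betab_h)+t^2\|\taub_h\|_{L^2}^2$ with no $2(\taub_h,\betab_h-\nabla u_h)$ to absorb. Second, your elementwise scalings and $L^2$-projections are unnecessary: by quasi-uniformity~\eqref{quasiunif-phys} one may use the global $h$ throughout, and then $(t+h)^{-2}(\betab_h-\nabla u_h)$ already lies in $\Shearsh$ (since $\Shearsh=\nabla\Wh+\Rotsh$ by definition), so the paper simply takes $(\etab_h^3,v_h^3,\esseb_h^3)=(\boldsymbol{0},0,(t+h)^{-2}(\betab_h-\nabla u_h))$ for your block~(ii). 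For your block~(iii) the paper applies \eqref{eq:Shears-physical-domain-decomposition} to $h^2\taub_h\in\Shearsh$ to obtain $\etab_h^2-\nabla v_h^2=h^2\taub_h$ with $\esseb_h^2=\boldsymbol{0}$ (not a nonzero $\esseb_h^{(3)}$ as your sketch suggests); the only cross term to absorb there is $a(\betab_h,\etab_h^2)$, which your expansion omits but which is handled by Cauchy--Schwarz and the $h^{-1}$ bound from \eqref{eq:Shears-physical-domain-decomposition}.
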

{\it Proof.} Taking $(\etab_h^1,v_h^1,\esseb_h^1)=(\betab_h,u_h,-\taub_h)$ in~\eqref{bil}, one immediately gets
\begin{eqnarray}
&& \bil(\betab_h,u_h,\taub_h;\etab_h^1,v_h^1,\esseb_h^1) \ge  
C ||\betab_h ||_{H^1(\Omega)}^2 + t^2 || \taub_h ||_{L^2(\Omega)}^2 \ , \label{eq6} \\
&& ||| \etab_h^1, v_h^1, \esseb_h^1 ||| \le ||| \betab_h, u_h,
\taub_h ||| \ . \label{eq7}
\end{eqnarray}
Due to (\ref{eq:Shears-physical-domain-decomposition}), we can choose
$(\etab_h^2,v_h^2,\esseb_h^2)\in \Rots_{0,h}\times\W_{0,h} \times\Shears_{h,0}$ such that
\begin{equation}
  \label{eq:temporanea}
  \begin{aligned}
 & \esseb_h^2 = \boldsymbol{0}\\
   & \etab_h^2 - \nabla v_h^2 = h^2
\taub_h \\ &\|  \etab_h^2 \| _{H^1(\Omega)} + \|  v_h^2   \|
_{H^1(\Omega)} \leq C h  \| \taub_h   \| _{L^2(\Omega)} \ .
  \end{aligned}
\end{equation}
The Cauchy-Schwarz inequality yields
\begin{equation}\label{eq1}
\begin{aligned}
\bil(\betab_h,u_h,\taub_h;\etab_h^2,v_h^2,\esseb_h^2) & =
a(\betab_h,\etab_h^2) + h^2 || \taub_h ||_{L^2(\Omega)}^2 \\
& \ge h^2 || \taub_h ||_{L^2(\Omega)}^2 - C  || \betab_h
||_{H^1(\Omega)} || \etab_h^2 ||_{H^1(\Omega)} \\
& \ge h^2 || \taub_h ||_{L^2(\Omega)}^2 - C h || \betab_h
||_{H^1(\Omega)} || \taub_h^2 ||_{H^1(\Omega)} \ .
\end{aligned}
\end{equation}
Applying the arithmetic-geometric mean inequality, from (\ref{eq:temporanea})--\eqref{eq1} one easily gets 
\begin{equation}\label{eq2}
\bil(\betab_h,u_h,\taub_h;\etab_h^2,v_h^2,\esseb_h^2) \ge
\frac{1}{2} h^2 || \taub_h ||_{L^2(\Omega)}^2 - C' || \betab_h
||_{H^1(\Omega)}^2 \ .
\end{equation}
Furthermore,  by definition \eqref{norms},  recalling
\eqref{quasiunif} and (\ref{eq:temporanea}),  one has
\begin{equation}\label{eq3}
||| \etab_h^2, v_h^2, \esseb_h^2 ||| \le C \Big( h|| \taub
||_{L^2(\Omega)} + \frac{h^2}{t+h} || \taub ||_{L^2(\Omega)} \Big)
\le C ||| \betab_h, u_h, \taub_h ||| \ .
\end{equation}
We finally select $(\etab_h^3,v_h^3,\esseb_h^3)=(0 ,0, (t+h)^{-2} (\betab_h-\nabla u_h) )$. Using again the
arithmetic-geometric mean inequality and some simple algebra,
we obtain
\begin{equation}\label{eq4}
\begin{aligned}
\bil(\betab_h,u_h,\taub_h;\etab_h^3,v_h^3,\esseb_h^3) & =
\frac{1}{(t+h)^2} || \betab_h - \nabla u_h ||_{L^2(\Omega)}^2 -
\frac{t^2}{(t+h)^2} (\taub,\betab_h-\nabla u_h) \\
& \ge \frac{1}{2(t+h)^2} || \betab_h - \nabla u_h
||_{L^2(\Omega)}^2 - C
\frac{t^4}{(t+h)^2} || \taub_h ||_{L^2(\Omega)}^2 \\
& \ge \frac{1}{2(t^2+h^2)} || \betab_h - \nabla u_h
||_{L^2(\Omega)}^2 - C t^2 || \taub_h ||_{L^2(\Omega)}^2 \ .
\end{aligned}
\end{equation}
Moreover, recalling \eqref{norms} and \eqref{quasiunif}, it
follows
\begin{equation}\label{eq5}
||| \etab_h^3, v_h^3, \esseb_h^3 ||| \le C (t+h)^{-1} ||
\betab_h-\nabla u_h ||_{L^2(\Omega)} \le C ||| \betab_h, u_h,
\taub_h ||| \ .
\end{equation}
We now consider the linear combination
$$
(\etab_h,v_h,\esseb_h) = \sum_{i=1}^3  c_i
(\etab_h^i,v_h^i,\esseb_h^i) \ ,
$$
with positive $c_i\in\R$. 
Estimates~\eqref{stab:a} and~ \eqref{stab:b} follow from a suitable choice of the coefficients $c_i$, by using  
\eqref{eq6}--\eqref{eq7}, \eqref{eq2}--\eqref{eq3} and \eqref{eq4}--\eqref{eq5}.

\fin

From Proposition \ref{prop:stab}, the following result is easily deduced.

\begin{prop}\label{prop:conv-interp}
Let $(\rots,\w)$ be the solution of Problem \eqref{P} and
$(\rots_h,\w_h)$ be the solution of Problem \eqref{Ph}. Let the
shears $\shears=t^{-2}(\rots-\nabla\w)$ and
$\shears_h=t^{-2}(\rots_h-\nabla\w_h)$. Then, for any
$(\rots_I,\w_I,\shears_I)$ in $\Rots_{0,h}\times\W_{0,h}\times\Shears_{0,h}$
it holds
$$
||| \rots-\rots_h, \w-\w_h, \shears-\shears_h ||| \le C |||
\rots-\rots_I,\w-\w_I,\shears-\shears_I ||| \ ,
$$
where the constant $C$ is independent of $h,t$.
\end{prop}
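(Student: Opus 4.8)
The plan is to run the standard Strang/Céa-type quasi-optimality argument: the uniform discrete stability of Proposition~\ref{prop:stab}, together with the consistency of the Galerkin scheme \eqref{Pbh} and the continuity of the bilinear form $\bil$ in the discrete norm $|||\cdot|||$, will yield the stated estimate. Throughout, $C$ denotes a generic constant independent of $h$ and $t$.

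First I would prove that $\bil$ is continuous on $\Rots_{0}\times\W_{0}\times\Shears$ with respect to $|||\cdot|||$, i.e.
\[
\bil(\betab,u,\taub;\etab,v,\esseb)\le C\,|||\betab,u,\taub|||\;|||\etab,v,\esseb|||.
\]
Inspecting the four terms of \eqref{bil}: the bending term is bounded via the continuity of $a(\cdot,\cdot)$ on $[H^1(\Omega)]^2$ and the fact, noted after \eqref{norms}, that $|||\cdot|||_{\Rots}$ dominates the $H^1$ norm of both the rotation and the deflection; the term $t^2(\taub,\esseb)$ is bounded by $t\|\esseb\|_{L^2(\Omega)}\cdot t\|\taub\|_{L^2(\Omega)}$; and the two crossed terms $(\taub,\etab-\nabla v)$ and $(\betab-\nabla u,\esseb)$ are handled by an element-wise Cauchy--Schwarz inequality on each $K\in\Omega_h$, inserting the factor $(t^2+h_K^2)^{1/2}$ on the shear component and $(t^2+h_K^2)^{-1/2}$ on the rotation--gradient mismatch, and then summing over $K$ with a discrete Cauchy--Schwarz. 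The identity $t^2\|\esseb\|_{L^2(\Omega)}^2+\sum_{K}h_K^2\|\esseb\|_{L^2(K)}^2=\sum_{K}(t^2+h_K^2)\|\esseb\|_{L^2(K)}^2$ shows that the resulting weights match exactly the ones in the definition \eqref{norms}, so no $h$- or $t$-dependent loss occurs.

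Next I would invoke consistency. Subtracting \eqref{Pbh} from \eqref{Pb} tested against discrete functions (legitimate since the discrete spaces are subspaces of the continuous ones) gives the Galerkin orthogonality $\bil(\rots-\rots_h,\w-\w_h,\shears-\shears_h;\etab_h,v_h,\esseb_h)=0$ for all $(\etab_h,v_h,\esseb_h)\in\Rots_{0,h}\times\W_{0,h}\times\Shears_{0,h}$. Now fix $(\rots_I,\w_I,\shears_I)$ in the discrete space and set $\betab_h=\rots_h-\rots_I$, $u_h=\w_h-\w_I$, $\taub_h=\shears_h-\shears_I$, which is a discrete triple. Applying Proposition~\ref{prop:stab} to it produces a discrete test function $(\etab_h,v_h,\esseb_h)$ with $|||\etab_h,v_h,\esseb_h|||\le C_2|||\betab_h,u_h,\taub_h|||$ and $C_1|||\betab_h,u_h,\taub_h|||^2\le\bil(\betab_h,u_h,\taub_h;\etab_h,v_h,\esseb_h)$. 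By bilinearity and Galerkin orthogonality the right-hand side equals $\bil(\rots-\rots_I,\w-\w_I,\shears-\shears_I;\etab_h,v_h,\esseb_h)$, which by the continuity bound is at most $C\,|||\rots-\rots_I,\w-\w_I,\shears-\shears_I|||\,|||\etab_h,v_h,\esseb_h|||$. Combining the three inequalities and cancelling one factor of $|||\betab_h,u_h,\taub_h|||$ gives $|||\betab_h,u_h,\taub_h|||\le C|||\rots-\rots_I,\w-\w_I,\shears-\shears_I|||$, and the claim follows from the triangle inequality $|||\rots-\rots_h,\w-\w_h,\shears-\shears_h|||\le|||\rots-\rots_I,\w-\w_I,\shears-\shears_I|||+|||\betab_h,u_h,\taub_h|||$.

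The only point that needs genuine care is the continuity estimate for $\bil$: one must verify that the mesh- and thickness-weighted cross terms in \eqref{bil} are precisely the quantities that the weights built into \eqref{norms} are designed to absorb. Once this bookkeeping is in place, the remainder is the textbook quasi-optimality argument; in particular, no further approximation theory is invoked here, since the right-hand side of the claimed estimate is left in terms of the arbitrary discrete triple $(\rots_I,\w_I,\shears_I)$.
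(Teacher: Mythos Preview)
Your proof is correct and follows exactly the same consistency--stability route as the paper: apply Proposition~\ref{prop:stab} to the discrete difference $(\rots_h-\rots_I,\w_h-\w_I,\shears_h-\shears_I)$, use Galerkin orthogonality to replace it by $(\rots-\rots_I,\w-\w_I,\shears-\shears_I)$ inside $\bil$, invoke continuity of $\bil$ in the $|||\cdot|||$ norm, and conclude with the triangle inequality. The paper merely asserts the continuity of $\bil$ (``Observing that the bilinear form $\bil$ is bounded with respect to the norm $|||\cdot|||$''), whereas you spell out the element-wise weight-splitting argument; this extra detail is correct and is precisely the bookkeeping the paper leaves to the reader.
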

{\it Proof.} The proof  is a consequence of standard consistency-stability arguments. We apply Proposition
\ref{prop:stab} to the difference
$(\rots_h-\rots_I,\w_h-\w_I,\shears_h-\shears_I)$, then apply the
error equation given by~\eqref{Pb} and \eqref{Pbh}, to derive
\begin{equation}\label{eq10}
||| \rots_h-\rots_I,\w_h-\w_I,\shears_h-\shears_I |||^2 \le C
\bil(\rots-\rots_I,\w-\w_I,\shears-\shears_I;\etab_h, v_h,
\esseb_h) \ ,
\end{equation}
where
\begin{equation}\label{eq11}
||| \etab_h, v_h, \esseb_h ||| \le |||
\rots_h-\rots_I,\w_h-\w_I,\shears_h-\shears_I ||| \ .
\end{equation}
Observing that the bilinear form $\bil$ is bounded with respect to
the norm $||| \cdot |||$ and applying \eqref{eq11}, bound~\eqref{eq10} gives
$$
||| \rots_h-\rots_I,\w_h-\w_I,\shears_h-\shears_I ||| \le C |||
\rots-\rots_I,\w-\w_I,\shears-\shears_I ||| \ .
$$
The result follows using the triangle inequality.

\fin

Combining the above proposition with the interpolation results of
Lemma~\ref{lemma:approximation-properties} the next Corollary easily follows.
\begin{corol}\label{prop:conv-rate}
Let $(\rots,\w)$ be the solution of Problem \eqref{P} and
$(\rots_h,\w_h)$ be the solution of Problem \eqref{Ph}. Let the
shears $\shears=t^{-2}(\rots-\nabla\w)$ and
$\shears_h=t^{-2}(\rots_h-\nabla\w_h)$. Then, provided the
solution of \eqref{P} is sufficiently regular for the right-hand-side to make sense, for all $1 < s \le p$ it holds

\begin{equation*}%\label{eq:err-est-dnorm}
||| \rots-\rots_h, \w-\w_h, \shears-\shears_h ||| \le C h^{s-1}
\Big( || \rots ||_{s} + || \w ||_{s+1} + || \shears ||_{{\rm
max}(0,s-2)} + t || \shears ||_{s-1} \Big) \ ,
\end{equation*}
where the constant $C$ is independent of $h$ and $t$.
\end{corol}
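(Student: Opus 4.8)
The plan is to combine the quasi-optimality estimate of Proposition~\ref{prop:conv-interp} with the approximation estimates of Lemma~\ref{lemma:approximation-properties}. Since Proposition~\ref{prop:conv-interp} is valid for \emph{every} triple $(\rots_I,\w_I,\shears_I)\in\Rots_{0,h}\times\W_{0,h}\times\Shears_{0,h}$, it suffices to produce one for which $|||\rots-\rots_I,\w-\w_I,\shears-\shears_I|||$ is bounded by the right-hand side of the claim. I would take
\[
\rots_I=\Pi_{\Rotsh}\rots,\qquad \w_I=\Pi_{\Wh}\w,\qquad \shears_I=\Pi_{\Shearsh}\shears,
\]
the projectors of Lemma~\ref{lemma:approximation-properties}; $\Pi_{\Shearsh}$ applies since $\shears=t^{-2}(\rots-\nabla\w)\in\Rots_{0}+\nabla\W_{0}$ and, by the assumed regularity of $(\rots,\w)$, $\shears\in(H^{s}(\Omega))^2$, hence $\shears\in\Shears^{s}$.

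Setting $e_\rots=\rots-\rots_I$, $e_\w=\w-\w_I$, $e_\shears=\shears-\shears_I$, I would estimate one by one the three contributions in definition~\eqref{norms} of $|||\cdot|||$. \emph{(i)} The rotation term is immediate: $\|e_\rots\|_{H^1(\Omega)}\le Ch^{s-1}|\rots|_{H^{s}(\Omega)}$. \emph{(ii)} For the coupling term I would use the quasi-uniformity~\eqref{quasiunif-phys} to bound $(t^2+h_K^2)^{-1}\le h_K^{-2}\le Ch^{-2}$ uniformly in $t$, so that
\[
\sum_{K\in\Omega_h}\frac{\|\nabla e_\w-e_\rots\|_{L^2(K)}^2}{t^2+h_K^2}\;\le\;\frac{C}{h^2}\Bigl(\|\nabla e_\w\|_{L^2(\Omega)}^2+\|e_\rots\|_{L^2(\Omega)}^2\Bigr),
\]
and then conclude with the $\Pi_{\Wh}$-estimate used at Sobolev index $s+1$ (admissible since $1<s+1\le p+1$), giving $\|\nabla e_\w\|_{L^2(\Omega)}\le Ch^{s}|\w|_{H^{s+1}(\Omega)}$, and with the $L^2$-version of the $\Pi_{\Rotsh}$-estimate (one extra power of $h$ with respect to its $H^1$ form, as the quasi-interpolant of \cite{BSV} also provides), giving $\|e_\rots\|_{L^2(\Omega)}\le Ch^{s}|\rots|_{H^{s}(\Omega)}$. \emph{(iii)} For the shear term, $t^2\|e_\shears\|_{L^2(\Omega)}^2$ is handled by the $\Pi_{\Shearsh}$-estimate at index $s-1$, and $\sum_{K}h_K^2\|e_\shears\|_{L^2(K)}^2\le Ch^2\|e_\shears\|_{L^2(\Omega)}^2$ by the same estimate at index $\max(0,s-2)$. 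Collecting the three bounds and inserting them into Proposition~\ref{prop:conv-interp} gives the result; the only routine care is to pick, in each range of $s$, the Sobolev index in every application of Lemma~\ref{lemma:approximation-properties} so that all powers of $h$ line up with $h^{s-1}$.

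The step I expect to be the main obstacle is \emph{(ii)}: since the weight $(t^2+h_K^2)^{-1}$ can be of size $h_K^{-2}$, the quantity $\nabla e_\w-e_\rots$ must be controlled in $L^2$ with the \emph{optimal} order $h^{s}$, not just the $H^1$ order $h^{s-1}$, which is precisely where the mesh quasi-uniformity~\eqref{quasiunif-phys}—stressed at the beginning of Section~\ref{sec:discr-norm}—and the optimal $L^2$ approximation of $\Pi_{\Wh}$ and $\Pi_{\Rotsh}$ are used; this is also what pushes $\|\w\|_{s+1}$ (one order above $\|\rots\|_{s}$) and the low-regularity term $\|\shears\|_{\max(0,s-2)}$ into the final estimate. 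As a consistency check, when $\Gamma_{c}=\Gamma_{ss}=\emptyset$ one may instead take $\rots_I=P_{\Rotsh}\rots$, $\shears_I=P_{\Rotsh}\shears$ with the commuting projectors of Remark~\ref{ref:diag:phis}: then $\nabla\w_I-\rots_I=P_{\Rotsh}(\nabla\w-\rots)=-t^2\shears_I$, so the coupling term collapses to $t^2\|\shears-P_{\Rotsh}\shears\|_{L^2(\Omega)}^2$ and one recovers a discrete-norm refinement of Lemma~\ref{dl-lemma}; this shortcut is unavailable under general boundary conditions, which is why the more careful estimate above is required.
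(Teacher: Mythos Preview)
Your approach is correct and is exactly what the paper does: the corollary is stated without proof as an immediate consequence of Proposition~\ref{prop:conv-interp} combined with Lemma~\ref{lemma:approximation-properties}, and you carry this out with the natural choice of interpolants. Your care in step~(ii)---noting that the coupling term forces an $L^2$-optimal bound on $\Pi_{\Rotsh}$ (one power of $h$ better than the $H^1$ estimate written in Lemma~\ref{lemma:approximation-properties}, but available from the underlying quasi-interpolant of \cite{BSV})---is a genuine subtlety that the paper leaves implicit.
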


We note that the regularity required in Corollary
\ref{prop:conv-rate} may be unrealistic for high values $s$, due to the presence of boundary layers
(see for instance \cite{af2}). 
%
%On the other hand, if the loading is regular and
%any internal domain $\Omega_{int}$ with positive distance from
%$\Gamma$ is considered, then the regularity assumed above does
%hold on $\Omega_{int}$. Thus, in the presence of boundary layers,
%the discrete solution is expected to converge with the higher rate
%$O(h^s)$ inside the domain and with a lower rate near the
%boundary.

%--------------------------------------------------------------------------
\subsection{Improved convergence rates}\label{sec:improved}
%--------------------------------------------------------------------------

We start this section with the following lemma, which shows
an improved convergence property for the $L^2$ norm of the rotations. 
We omit the proof,  since it merely adopts a classical Aubin-Nietsche argument.

\begin{lemma}\label{lemma:rotsL2}
Let the Problem~\eqref{P} be regular. Then, under the notation of Proposition~\ref{prop:conv-interp}, it holds
$$
|| \rots - \rots_h ||_{L^2(\Omega)} \le C h \Big( || \rots-\rots_h
||_{H^1(\Omega)} + t || \shears - \shears_h ||_{L^2(\Omega)} \Big)
$$
with $C$ independent of $h,t$.
\end{lemma}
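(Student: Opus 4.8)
The plan is to run a duality (Aubin--Nitsche) argument for the rotation error $\esseb := \rots - \rots_h$ in $L^2(\Omega)$. First I would introduce the auxiliary continuous problem: find $(\psib,\varphib,\taub)\in\Rots_0\times\W_0\times\Shears$ solving the Reissner--Mindlin problem \eqref{Pb} with right-hand side chosen to pick out $\esseb$, namely
\begin{equation*}
\bil(\psib,\varphib,\taub;\etab,v,\esseb') = (\rots-\rots_h,\etab) \quad \forall (\etab,v,\esseb')\in\Rots_0\times\W_0\times\Shears .
\end{equation*}
Testing with $(\etab,v,\esseb') = (\rots-\rots_h,\w-\w_h,\shears-\shears_h)$ on the left and using the symmetry of $\bil$ gives
\begin{equation*}
\|\rots-\rots_h\|_{L^2(\Omega)}^2 = \bil(\rots-\rots_h,\w-\w_h,\shears-\shears_h;\psib,\varphib,\taub).
\end{equation*}
By the Galerkin orthogonality \eqref{Pb}--\eqref{Pbh} one may subtract any discrete triple $(\psib_h,\varphib_h,\taub_h)\in\Rots_{0,h}\times\W_{0,h}\times\Shears_{0,h}$ from $(\psib,\varphib,\taub)$ inside the bilinear form, so that
\begin{equation*}
\|\rots-\rots_h\|_{L^2(\Omega)}^2 = \bil(\rots-\rots_h,\w-\w_h,\shears-\shears_h;\psib-\psib_h,\varphib-\varphib_h,\taub-\taub_h).
\end{equation*}

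Next I would bound the right-hand side using boundedness of $\bil$ in the triple-bar norm $|||\cdot|||$ together with the interpolation estimates of Lemma~\ref{lemma:approximation-properties} applied to the dual solution. The key point is the elliptic (duality) regularity for the Reissner--Mindlin system with the load $(\rots-\rots_h,0,0)$: under the regularity hypothesis on Problem~\eqref{P}, one has $\|\psib\|_{H^2} + \|\varphib\|_{H^2} + \|\taub\|_{\Shears,\mathrm{reg}} \le C\|\rots-\rots_h\|_{L^2(\Omega)}$ uniformly in $t$ (this is the standard $t$-uniform shift theorem for the RM model; see \cite{af2}). Choosing $(\psib_h,\varphib_h,\taub_h)$ as the interpolants of $(\psib,\varphib,\taub)$, Lemma~\ref{lemma:approximation-properties} yields $|||\psib-\psib_h,\varphib-\varphib_h,\taub-\taub_h||| \le C h\big(\|\psib\|_{H^2}+\|\varphib\|_{H^2}+t\|\taub\|_{H^1}+\|\taub\|_{L^2}\big) \le C h \|\rots-\rots_h\|_{L^2(\Omega)}$, where the $h$-weighted shear terms in $|||\cdot|||_\Shears$ are precisely what makes the $L^2$ norm of $\taub$ enter with the right power of $h$. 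Combining with $|||\rots-\rots_h,\w-\w_h,\shears-\shears_h||| \le C\big(\|\rots-\rots_h\|_{H^1} + t\|\shears-\shears_h\|_{L^2} + \text{(lower order in } \w)\big)$ and dividing by $\|\rots-\rots_h\|_{L^2(\Omega)}$ gives the claim; the deflection contribution to $|||\cdot|||_\Rots$ is controlled by the rotation and shear terms via $\nabla(\w-\w_h) = (\rots-\rots_h) - t^2(\shears-\shears_h)$, so it does not spoil the estimate.

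The main obstacle is the $t$-uniform duality regularity: one must ensure the dual problem enjoys an $H^2$ (for $\psib,\varphib$) a priori bound with a constant independent of $t$, and that the natural regularity of $\taub$ (roughly $\taub\in (H^1)^2$ with $t\|\taub\|_{H^1}\le C\|\esseb\|_{L^2}$, plus an $L^2$ bound) matches the weights built into $|||\cdot|||_\Shears$. This is exactly the content of the known regularity theory for clamped/supported RM plates, but it must be invoked carefully (and is the reason the statement is phrased conditionally, "Let the Problem~\eqref{P} be regular"). The remainder is the routine Aubin--Nitsche bookkeeping, which is why the paper omits it.
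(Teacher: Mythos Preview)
Your proposal is correct and follows exactly the approach indicated by the paper, which omits the proof and simply states that ``it merely adopts a classical Aubin--Nitsche argument.'' The duality setup, Galerkin orthogonality, appeal to $t$-uniform regularity of the dual problem, and the use of $\nabla(\w-\w_h)=(\rots-\rots_h)-t^2(\shears-\shears_h)$ to control the deflection contribution are precisely the ingredients intended; the only care needed (which you implicitly allow for) is that, rather than invoking continuity of $\bil$ in the full $|||\cdot|||$ norm, one should exploit the identity $e_{\rots}-\nabla e_{\w}=t^2 e_{\shears}$ directly in the expansion of $\bil$ so that the shear term enters with the factor $t$ as stated.
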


We remark that the regularity of Problem~\eqref{P} holds, for
instance, when the domain $\Omega$ is convex and the plate is clamped on the whole boundary \cite{RS11}.
For more general cases we refer the reader to \cite{af2,RS11} where the (smooth) boundary layers and the effect of corners are respectively investigated. 

The following result gives an improved convergence rate of convergence for
the deflection variable.

\begin{lemma}\label{lemma:deflH1}
Under the assumptions and notation of Lemma~\ref{lemma:rotsL2},
for any interpolant $\w_I\in\W_{h,0}$, it holds
$$
|| \w - \w_h ||_{H^1(\Omega)} \le C h \Big( || \rots-\rots_h
||_{H^1(\Omega)} + t || \shears - \shears_h ||_{L^2(\Omega)} \Big) +
|| \nabla (\w-\w_I) ||_{L^2(\Omega)}
$$
with $C$ independent of $h$ and $t$.
\end{lemma}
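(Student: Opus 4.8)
The plan is to bound $\|\w-\w_h\|_{H^1}$ by splitting $\w-\w_h = (\w-\w_I) + (\w_I-\w_h)$ via the triangle inequality. The first term $\|\nabla(\w-\w_I)\|_{L^2}$ is left explicit in the statement, so all the work goes into estimating the discrete quantity $\|\w_I-\w_h\|_{H^1}$, equivalently $\|\nabla(\w_I-\w_h)\|_{L^2}$ (using that $\w_I-\w_h\in\W_{h,0}$ vanishes on a set of positive measure so a Poincar\'e inequality applies). The key identity to exploit is the relation between deflection gradient, rotation and shear coming from the definitions: since $\shears_h=t^{-2}(\rots_h-\nabla\w_h)$ and $\shears=t^{-2}(\rots-\nabla\w)$, one has $\nabla\w_h = \rots_h - t^2\shears_h$ and $\nabla\w = \rots - t^2\shears$. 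The idea is therefore to write $\nabla(\w_I-\w_h) = (\nabla\w_I - \rots_h) + (\rots_h - \nabla\w_h) = (\nabla\w_I - \rots_h) + t^2\shears_h$, and to control $\nabla\w_I - \rots_h$ by testing the discrete equations against a cleverly chosen test function.

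First I would choose in Problem~\eqref{Ph} (or equivalently the mixed form) the test pair $(\etab_h,v_h) = (\boldsymbol{0}, \w_I-\w_h)$. This gives $t^{-2}(\rots_h-\nabla\w_h, -\nabla(\w_I-\w_h)) = (f,\w_I-\w_h)$, i.e. $(\shears_h,\nabla(\w_h-\w_I)) = (f,\w_I-\w_h)$. On the other hand, testing the continuous problem~\eqref{P} with $(\boldsymbol{0}, v)$ for $v\in\W_0$ extended appropriately, or rather using $(\shears,\nabla v) = -(f,v)$ — i.e. $\shears = t^{-2}(\rots-\nabla\w)$ satisfies $\div\shears = -f$ in the appropriate weak sense — I would subtract to obtain an error equation of the form $(\shears-\shears_h, \nabla(\w_I-\w_h)) = 0$ up to a consistency term involving $\w-\w_I$. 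Then $\|\nabla(\w_I-\w_h)\|_{L^2}^2$ is rewritten as $(\nabla(\w_I-\w_h),\nabla(\w_I-\w)) + (\nabla(\w_I-\w_h),\nabla\w - \nabla\w_h)$, and the second inner product is manipulated using $\nabla\w-\nabla\w_h = (\rots-\rots_h) - t^2(\shears-\shears_h)$, i.e. equation~\eqref{dl-9}. The piece $(\nabla(\w_I-\w_h),\rots-\rots_h)$ is the delicate one: since $\w_I-\w_h$ is discrete, I would integrate by parts element-wise and use that $\rots_h$ approximates $\rots$, or better, insert $\pm\rots_{I}$ and use a discrete inf-sup / commuting-projector type argument together with the element-wise scaling $\|\nabla(\w_I-\w_h)\|_{L^2(K)}$ paired against $h_K^{-1}$ factors — precisely the mechanism behind the weight $\tfrac{1}{t^2+h_K^2}$ in the discrete norm~\eqref{norms} from Proposition~\ref{prop:conv-interp} and its Corollary~\ref{prop:conv-rate}. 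The upshot should be $\|\nabla(\w_I-\w_h)\|_{L^2} \le C h(\|\rots-\rots_h\|_{H^1} + t\|\shears-\shears_h\|_{L^2}) + C\|\nabla(\w-\w_I)\|_{L^2}$, where the extra $h$ factor is gained by duality/Aubin--Nitsche exactly as in Lemma~\ref{lemma:rotsL2}, whose statement we are allowed to invoke.

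The main obstacle I anticipate is making rigorous the gain of the full power of $h$ in front of $\|\rots-\rots_h\|_{H^1}$: a naive estimate would only give $\|\nabla(\w_I-\w_h)\|_{L^2}\lesssim \|\rots-\rots_h\|_{L^2} + t^2\|\shears-\shears_h\|_{L^2} + \|\nabla(\w-\w_I)\|_{L^2}$, and then one must substitute the $L^2$ rotation estimate of Lemma~\ref{lemma:rotsL2} to trade $\|\rots-\rots_h\|_{L^2}$ for $h(\|\rots-\rots_h\|_{H^1}+t\|\shears-\shears_h\|_{L^2})$, and separately absorb $t^2\|\shears-\shears_h\|_{L^2} \le (t^2/h)\cdot h\|\shears-\shears_h\|_{L^2} \le C h\,t\|\shears-\shears_h\|_{L^2}$ using $t\le C$ (or more carefully $t^2 \le t(t+h)$ and the quasi-uniformity~\eqref{quasiunif-phys}). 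Handling the consistency term $(\shears-\shears_h,\nabla(\w-\w_I))$ and the cross terms cleanly, and checking that every constant is $h,t$-independent, is the bookkeeping-heavy part; but structurally the argument is the standard "test the deflection equation against $\w_I-\w_h$, then use the shear--rotation--deflection identity and Aubin--Nitsche" recipe, so I would present it compactly and lean on Lemmas~\ref{lemma:approximation-properties} and~\ref{lemma:rotsL2}.
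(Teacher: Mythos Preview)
Your core approach matches the paper's exactly: test both the continuous and discrete problems with $(\boldsymbol{0},v_h)$, subtract to get the orthogonality $(\nabla(\w-\w_h),\nabla v_h)=(\rots-\rots_h,\nabla v_h)$, take $v_h=\w_h-\w_I$, apply Cauchy--Schwarz, and invoke Lemma~\ref{lemma:rotsL2} to convert $\|\rots-\rots_h\|_{L^2}$ into $Ch(\|\rots-\rots_h\|_{H^1}+t\|\shears-\shears_h\|_{L^2})$. Indeed, what you call the ``naive estimate'' in your second paragraph \emph{is} the entire argument; there is nothing more to it.

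You are, however, significantly overcomplicating things and one of your proposed fixes is actually wrong. First, the term $(\nabla(\w_I-\w_h),\rots-\rots_h)$ is not delicate at all: a single Cauchy--Schwarz gives $\|\nabla(\w_I-\w_h)\|_{L^2}\|\rots-\rots_h\|_{L^2}$, and Lemma~\ref{lemma:rotsL2} immediately supplies the factor $h$. No elementwise integration by parts, no inf-sup, no commuting projectors, no weighted norms from~\eqref{norms} are needed. Second, the $t^2(\shears-\shears_h,\nabla(\w_I-\w_h))$ term that worries you is exactly zero: it vanishes by the Galerkin orthogonality $(\shears-\shears_h,\nabla v_h)=0$ that you yourself derived a few lines earlier. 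Your proposed absorption $t^2\|\shears-\shears_h\|_{L^2}\le Ch\cdot t\|\shears-\shears_h\|_{L^2}$ would require $t\le Ch$, which is false in general, so it is fortunate that this term simply disappears. Drop the detours and the proof is three lines long.
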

{\it Proof.} Testing Problems~\eqref{P} and~\eqref{Ph}
with the choice $({\bf 0},v_h)\in\Rots_{h,0} \times \W_{h,0}$, and taking the difference, one obtains
\begin{equation*}%\label{eq8}
(\nabla(\w-\w_h),\nabla v_h) = (\rots-\rots_h,\nabla v_h) \ .
\end{equation*}
Thus, setting $v_h=\w_h-\w_I$ one gets
\begin{equation}\label{eq9}
\begin{aligned}
|| \nabla (\w_h-\w_I) ||_{L^2(\Omega)}^2 & = (\nabla (\w_h-\w_I),\nabla v_h) \\
&= (\nabla (\w_h-\w),\nabla v_h) + (\nabla (\w-\w_I),\nabla v_h)
\\ & = (\rots-\rots_h,\nabla v_h) + (\nabla (\w-\w_I),\nabla v_h) .
\end{aligned}
\end{equation}
Applying the Poincar\'e inequality, then using \eqref{eq9} and the
Cauchy-Schwarz inequality, we obtain
$$
|| \w_h-\w_I ||_{H^1(\Omega)} \le C || \nabla (\w_h-\w_I)
||_{L^2(\Omega)} \le C || \rots-\rots_h ||_{L^2(\Omega)} + ||
\nabla (\w-\w_I) ||_{L^2(\Omega)} \ .
$$
The result trivially follows from the above bound, combined with the
triangle inequality and Lemma \ref{lemma:rotsL2}.

\fin

Note that in the proposed method the approximation order of the
space $\W_h$ is one point higher with respect to the rotation
space $\Rots_h$, as shown in Lemma
\ref{lemma:approximation-properties}. Therefore, the above result
indeed implies an improved rate of convergence with respect to
Proposition \ref{prop:conv-interp}.

%---------------------------------------------------------------------------
%\subsection{An alternative analysis}\label{sec:DLstyle}
%---------------------------------------------------------------------------

%\newpage
%------------------------------------------------------------------------------------------
\section{Numerical tests}\label{sec:nums}
%------------------------------------------------------------------------------------------

In this section we present some numerical experiments to show the
actual performance of the numerical methods introduced in Section~\ref{sec:disc-spaces}. For all tests, we
select a material with Poisson's ratio $\nu = 0.3$ and Young's modulus $E = 1.092 \cdot 10^7 N/m^2$. 
Accordingly, we will always express forces and lengths in $N$ and $m$, respectively.

In all the tests we start from the mesh used for the geometry representation and we build the different components of the approximation spaces performing three simple steps:
\begin{enumerate}
	\item  degree elevation of the basis function used for the mapping, to the requested degree of the given field component;
	\item  knot repetitions  in order to have the proper continuity; 
	\item the $h$-refinement is performed using uniform knot insertions in the original knots vectors.
\end{enumerate}

We remark that, if the geometry is described by NURBS, step 1. is perfomed on the B-splines basis obtained by setting the NURBS weight to one.

\subsection{Case 1: fully hardly clamped square plate.}

We consider  a problem having a known the analytical solution  (see~\cite{lovachinosi}). 
This test consists of a unitary square block $[0,1]^2$ with all the four sides clamped, i.e., $\Gamma= \Gamma_c$, subject to a body load given by
\begin{align*}
 f(x,y) 
 & = \frac{E}{12(1-\nu^2)} \bigl[ 
 12 y (y-1)(5x^2-5x+1) 
 ( 2 y^2 (y-1)^2 \nonumber \\
 &\phantom{=} + x (x-1) (5y^2-5y+1) ) \nonumber \\
 &\phantom{=}
 +12 x (x-1)(5y^2-5y+1) 
 ( 2 x^2 (x-1)^2 \nonumber \\
 &\phantom{=} + y (y-1) (5x^2-5x+1) )
 \bigr]\ .
\end{align*}
The analytical solution is
\begin{align*}
\rots(x,y) 
& =
\begin{pmatrix}
y^3 (y-1)^3 x^2 (x-1)^2 (2x-1) \\ \\
x^3 (x-1)^3 y^2 (y-1)^2 (2y-1)
\end{pmatrix}
\\ 
\nonumber \\
\w(x,y) 
&= \frac{1}{3} x^3 (x-1)^3 y^3 (y-1)^3 \nonumber \\
&\phantom{=} - \frac{2 t^2}{5(1-\nu)} \bigl[
y^3 (y-1)^3 x (x-1) (5x^2-5x+1) \nonumber \\
&\phantom{=} + x^3 (x-1)^3 y (y-1) (5y^2-5y+1)
 \bigr] \; .
\end{align*}

In Figure~\ref{fig:case1_results} we plot the $H^1$-norm approximation error for the displacement field $\w(x,y)$ and for the rotation field $\rots(x,y)$. We run tests with $p=3,\ 4$ ad $\alpha=2,\ 3$ both for deflections and rotations,  respectively, 
and the  thickness $t=10^{-3}$. Figure~\ref{fig:case1_results} clearly displays convergence rates in perfect agreement with the theoretical results of 
Section~\ref{sec:conv}.

In addition, we performed simulations (not reported here) using also $t\in\{ 10^{-1}, 10^{-2}, 10^{-4} \}$, and 
we found that the methods are substantially insensitive to thickness changes.  

\begin{figure}[!h]
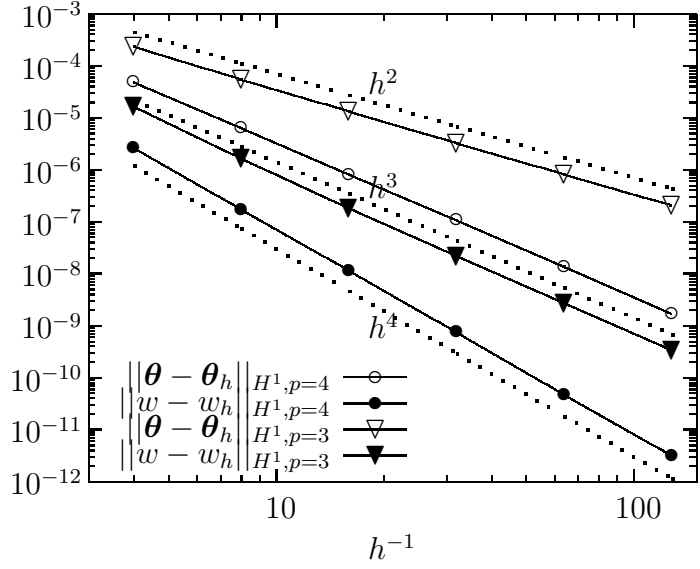

\centering
\include{ReissnerMindlin_Case1_t=1e-3}
\caption{Case 1 (square with four sides hardly clamped),
  $t=10^{-3}$.} \label{fig:case1_results}
\end{figure}

\subsection{Case 2: quarter of an annulus with four hardly simply supported sides.}

The second test consists in a fully hardly simply supported quarter of an annulus plate., i.e., $\Gamma= \Gamma_{sh}$ (see Figure~\ref{fig:case2_geometry}) and loaded by  
$$f\displaystyle{(x,y)=10^4 \; \sin \bigl( 2\arctan(\tfrac{y}{x})} \bigr)\ .$$

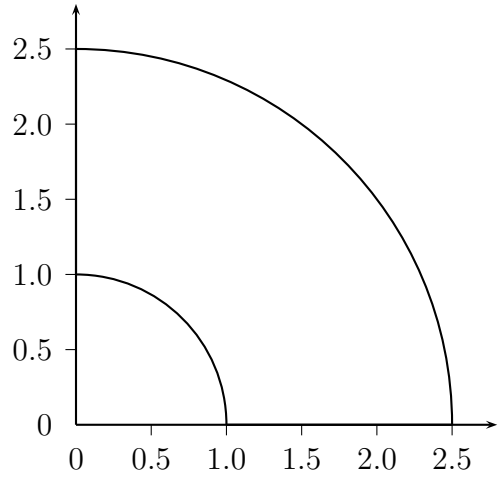
\begin{figure}[!h]
	\centering
	\psset{unit=2cm}
	\begin{pspicture}(0.0,0.0)(3.0,3.0)
		\psaxes[Dx=0.5,Dy=0.5,tickstyle=bottom]{->}(0.0,0.0)(2.8,2.8)
		\psarc(0.0,0.0){1.0}{0}{90}
		\psarc(0.0,0.0){2.5}{0}{90}
		\psline(1.0,0.0)(2.5,0.0)
		\psline(0.0,1.0)(0.0,2.5)
	\end{pspicture}
	\vskip0.5cm
	\caption{Fully hardly simply supported annular plate.} \label{fig:case2_geometry}
\end{figure}

In this case the analytical solution is not available. Therefore, we use as reference solution the one computed on a fine mesh  ($h=1/256$) and with degree $p=3$ and regularity index $\alpha=2$ both for deflections and rotations. The same degree and regularity index are used for the simulations on the coarser meshes.

In Figure~\ref{fig:case2_results_1e-2} and Figure~\ref{fig:case2_results_1e-3} we plot the $H^1$-norm approximation errors for $t=10^{-2}$ and $t=10^{-3}$ respectively. Once again, an excellent agreement with the theoretical predictions can be noticed.
% with the theory for the space we used, i.e. an approximation error $O(h^3)$ for the deflection field and $O(h^2)$ for the rotations field. 

\begin{figure}[!h]
\centering
\include{ReissnerMindlin_Case2_p=3_t=1e-2}
\caption{Case 2 (fully hardly simply supported annular plate). $p=3$, $t=10^{-2}$.} \label{fig:case2_results_1e-2}
\end{figure}

\begin{figure}[!h]
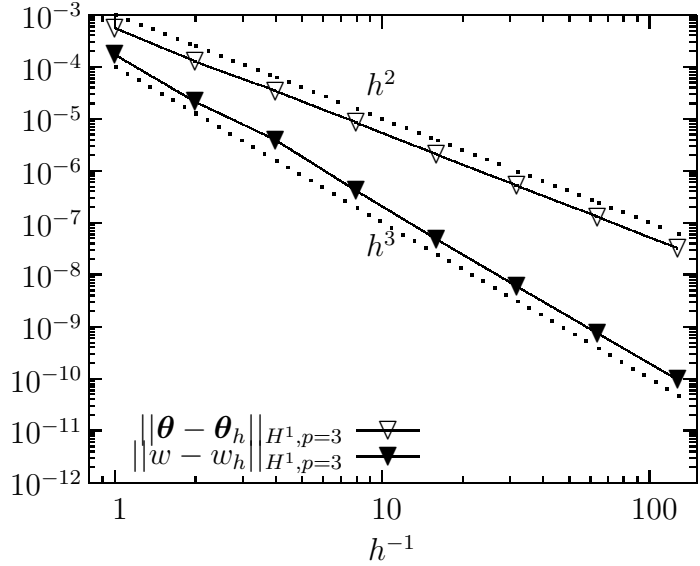

\centering
\include{ReissnerMindlin_Case2_p=3_t=1e-3}
\caption{Case 2 (fully hardly simply supported annular plate). $p=3$, $t=10^{-3}$.} \label{fig:case2_results_1e-3}
\end{figure}

\subsection{Case 3: boundary layer test problem}
The third test problem shares the same load and geometry used for Case~2 (Figure~\ref{fig:case2_geometry}), but with different boundary conditions. 
More precisely, we set $\Gamma_c = \emptyset$ and 

\begin{itemize}
	\item $\Gamma_{ss} =\{  (x,y)\in \R^2 \ : \  x^2+y^2=1 \}$; 
	\item $\Gamma_{sh}=\{ (0, y)\ :\ 1< y < 2.5  \}\cup \{(x,0) \ : \ 1< x< 2.5\} $   
	\item $\Gamma_{f} =\{ (x,y)\in \R^2  \ :\ x^2+y^2=\tfrac{25}{4}\}$.
\end{itemize}
On the curved parts of the boundary, this problem exhibits a boundary layer whose characteristic length is $O(t)$ (see~\cite{af2}).
In our computations we set $t=10^{-2}$, and $p=3$, $\alpha=2$ both for deflections and rotations. 
For this test case we always plot, in log-log scale, the errors versus $N_{DOF}^{1/2}$, where $N_{DOF}$ denots the total number of degrees of freedom.
We remark that for uniform meshes, $N_{DOF}^{1/2}$ behaves like $h^{-1}$.

As for Case~2, the analytical solution is not available, and a reference numerical solution is obtained using a very fine mesh.
%Errors on coarser meshes are again computed by means of that reference solution.

Figure~\ref{fig:case3_results_uniform_DOF} displays the error behaviour when using a uniform mesh refinement. It is worth noticing that  
a severe suboptimal convergence rate occurs as long as the mesh is not sufficiently fine to resolve the boundary layer. Of course, this phenomenon
implies that uniform meshes requires an excessive number of degrees of freedom to significantly reduce the error, especially for small 
plate thicknesses.  
%

%the behaviour of the approximation errors: for the coarsest meshes ($h \lesssim 1/64$) we have a very low error reduction, due to the fact that the layers are not well-resolved, then for the finest mesh
%$h \simeq 1/128$) we observe a stronger reduction in agreement with the fact that now the boundary layers are well resolved.

 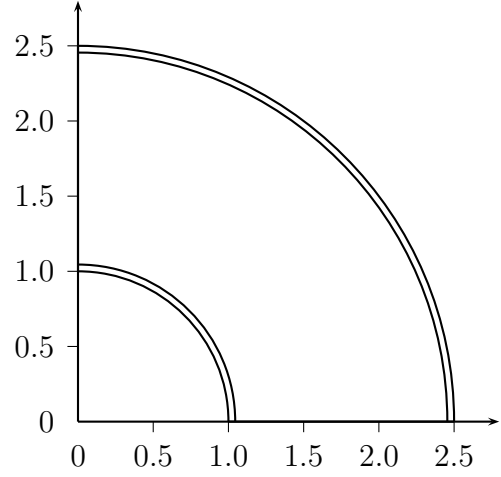
\begin{figure}[!h]
 	\centering
 	\psset{unit=2cm}
 	\begin{pspicture}(0.0,0.0)(3.0,3.0)
 		\psaxes[Dx=0.5,Dy=0.5,tickstyle=bottom]{->}(0.0,0.0)(2.8,2.8)
 		\psarc(0.0,0.0){1.0}{0}{90}
 		\psarc(0.0,0.0){1.045}{0}{90}
 		\psarc(0.0,0.0){2.455}{0}{90}
 		\psarc(0.0,0.0){2.5}{0}{90}
 		\psline(1.0,0.0)(2.5,0.0)
 		\psline(0.0,1.0)(0.0,2.5)
 	\end{pspicture}
 	\vskip0.5cm
 	\caption{Coarsest mesh for the layers-adapted case.} \label{fig:case3_geometry_adapted}
 \end{figure}

We now employ a sequence of meshes adapted to the boundary layers (Figure~\ref{fig:case3_geometry_adapted}), 
using the following procedure.
An initial mesh consisting of three elements is set up as in Figure~\ref{fig:case3_geometry_adapted}. The width of both the layer elements 
is 0.045 (corresponding to 3\% of the total annulus width). The finer meshes are then obtained by uniform refinement of each of the three initial elements.
Figure~\ref{fig:case3_results_adapted_DOF} shows the error behaviour using that mesh sequence. We notice that the optimal convergence rate 
for the $H^1$-norm error is now restored.

%\begin{figure}[!h]
%\centering
%\include{ReissnerMindlin_Case3_p=3_MeshUniform_t=1e-2}
%\caption{Case 3 (quarter of ring with boundary layers). Uniform mesh. $p=3$, $t=10^{-2}$} \label{fig:case3_results_uniform}
%\end{figure}

\begin{figure}[!h]
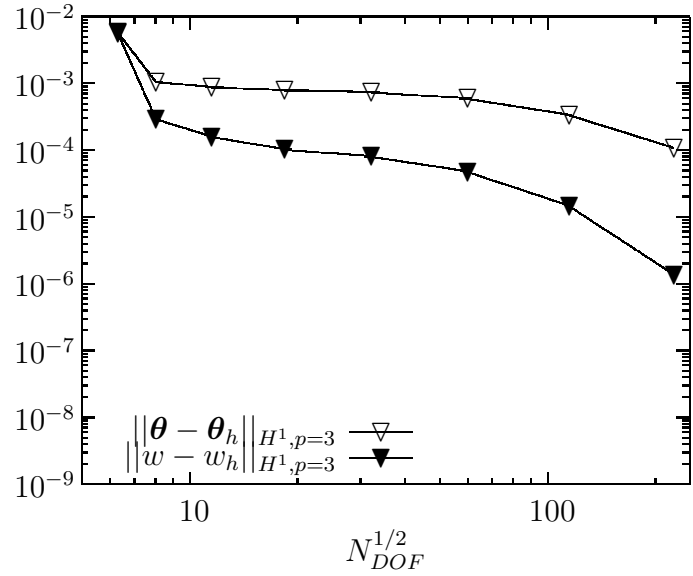

\centering
\include{ReissnerMindlin_Case3_p=3_MeshUniform_t=1e-2_DOF}
\caption{Case 3 (quarter of ring with boundary layers). Uniform mesh. $p=3$, $t=10^{-2}$} \label{fig:case3_results_uniform_DOF}
\end{figure}

%\begin{figure}[!h]
%\centering
%\include{ReissnerMindlin_Case3_p=3_MeshAdapted_layers=3e-2_t=1e-2}
%\caption{Case 3 (quarter of ring with boundary layers). Layers-adapted mesh. $p=3$, $t=10^{-2}$. $h$ represents the maximum element size in the parametric domain.} \label{fig:case3_results_adapted}
%\end{figure}

\begin{figure}[!h]
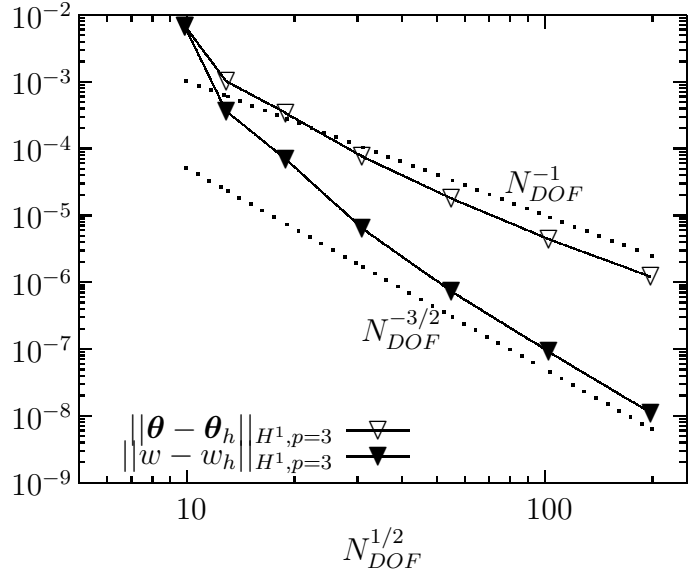

\centering
\include{ReissnerMindlin_Case3_p=3_MeshAdapted_layers=3e-2_t=1e-2_DOF}
\caption{Case 3 (quarter of ring with boundary layers). Layers-adapted mesh. $p=3$, $t=10^{-2}$.} \label{fig:case3_results_adapted_DOF}
\end{figure}

%\subsection{Case4: rigid motion patch test}

%\begin{figure}[!h]
%\centering
%\include{ReissnerMindlin_Case4_p=3_t=1e-2}
%\caption{Case 4 (quarter of ring with boundary conditions for rigid motion test). $p=3$, $t=10^{-2}$}
%\end{figure}

\section*{Acknowledgments}
The authors were partially supported by the European Research Council through the FP7 Ideas Starting Grant 205004: \emph{GeoPDEs - Innovative compatible discretization techniques for partial differential equations}, and by the Italian MIUR through the FIRB ``Futuro in Ricerca'' Grant RBFR08CZ0S \emph{Discretizzazioni Isogeometriche per la  Meccanica del Continuo}. Giancarlo Sangalli was also  partially supported by the European Research Council
Ideas Starting Grant 259229: \emph{ISOBIO - Isogeometric Methods for Biomechanics}. This support is gratefully acknowledged.

\newpage

%--------------------------------------------------------------------------
\bibliographystyle{model1-num-names}
\bibliography{Plate-IGA.bib}

\begin{thebibliography}{33}
\expandafter\ifx\csname natexlab\endcsname\relax\def\natexlab#1{#1}\fi
\providecommand{\bibinfo}[2]{#2}
\ifx\xfnm\relax \def\xfnm[#1]{\unskip,\space#1}\fi
%Type = Book
\bibitem[{Brezzi and Fortin(1991)}]{brefor}
\bibinfo{author}{F.~Brezzi}, \bibinfo{author}{M.~Fortin}, \bibinfo{title}{Mixed
  and {H}ybrid {F}inite {E}lement {M}ethods}, \bibinfo{publisher}{Springer},
  \bibinfo{address}{New York}, \bibinfo{year}{1991}.
%Type = Article
\bibitem[{Arnold and Falk(1989)}]{arfabas}
\bibinfo{author}{D.~N. Arnold}, \bibinfo{author}{R.~S. Falk},
\newblock \bibinfo{title}{A uniformly accurate finite element method for the
  {R}eissner-{M}indlin plate},
\newblock \bibinfo{journal}{SIAM J. Numer. Anal.} \bibinfo{volume}{26}
  (\bibinfo{year}{1989}) \bibinfo{pages}{1276--1290}.
%Type = Article
\bibitem[{Auricchio and Lovadina(2001)}]{aulov}
\bibinfo{author}{F.~Auricchio}, \bibinfo{author}{C.~Lovadina},
\newblock \bibinfo{title}{Analysis of kinematic linked interpolation methods
  for {R}eissner-{M}indlin plate problems},
\newblock \bibinfo{journal}{Comput. Methods Appl. Mech. Engrg.}
  \bibinfo{volume}{190} (\bibinfo{year}{2001}) \bibinfo{pages}{2465--2482}.
%Type = Article
\bibitem[{Beir\~{a}o~da Veiga(2004)}]{Be04}
\bibinfo{author}{L.~Beir\~{a}o~da Veiga},
\newblock \bibinfo{title}{Finite element methods for a modified
  {R}eissner-{M}indlin free plate model},
\newblock \bibinfo{journal}{SIAM J. Numer. Anal.} \bibinfo{volume}{42}
  (\bibinfo{year}{2004}) \bibinfo{pages}{1572--1591}.
%Type = Article
\bibitem[{Brezzi et~al.(1991)Brezzi, Fortin, and Stenberg}]{BFS91}
\bibinfo{author}{F.~Brezzi}, \bibinfo{author}{M.~Fortin},
  \bibinfo{author}{R.~Stenberg},
\newblock \bibinfo{title}{Error analysis of mixed-interpolated elements for
  {R}eissner-{M}indlin plates},
\newblock \bibinfo{journal}{Math. Models Meth. Appl. Sci.} \bibinfo{volume}{1}
  (\bibinfo{year}{1991}) \bibinfo{pages}{125--151}.
%Type = Article
\bibitem[{Chapelle and Stenberg(1998)}]{chasten}
\bibinfo{author}{D.~Chapelle}, \bibinfo{author}{R.~Stenberg},
\newblock \bibinfo{title}{An optimal low-order locking-free finite element
  method for {R}eissner-{M}indlin plates},
\newblock \bibinfo{journal}{Math. Models Methods Appl. Sci.}
  \bibinfo{volume}{8} (\bibinfo{year}{1998}) \bibinfo{pages}{407--430}.
%Type = Article
\bibitem[{Duran and Liberman(1992)}]{DL92}
\bibinfo{author}{R.~Duran}, \bibinfo{author}{E.~Liberman},
\newblock \bibinfo{title}{On mixed finite element methods for the
  {R}eissner-{M}indlin plate model},
\newblock \bibinfo{journal}{Math. Comp.} \bibinfo{volume}{58}
  (\bibinfo{year}{1992}) \bibinfo{pages}{561--573}.
%Type = Article
\bibitem[{Falk and Tu(2000)}]{falktu}
\bibinfo{author}{R.~S. Falk}, \bibinfo{author}{T.~Tu},
\newblock \bibinfo{title}{Locking-free finite elements for the
  {R}eissner-{M}indlin plate},
\newblock \bibinfo{journal}{Math. Comp.} \bibinfo{volume}{69}
  (\bibinfo{year}{2000}) \bibinfo{pages}{911--928}.
%Type = Article
\bibitem[{Hughes and Franca(1988)}]{HF88}
\bibinfo{author}{T.~J.~R. Hughes}, \bibinfo{author}{L.~P. Franca},
\newblock \bibinfo{title}{A mixed finite element formulation for
  {R}eissner-{M}indlin plate theory: uniform convergence of all higher-order
  spaces},
\newblock \bibinfo{journal}{Comput. Methods Appl. Mech. Engrg.}
  \bibinfo{volume}{67} (\bibinfo{year}{1988}) \bibinfo{pages}{223--240}.
%Type = Article
\bibitem[{Lovadina(2005)}]{lovadina05}
\bibinfo{author}{C.~Lovadina},
\newblock \bibinfo{title}{A low-order nonconforming finite element for
  {R}eissner-{M}indlin plates},
\newblock \bibinfo{journal}{SIAM J. Numer. Anal.} \bibinfo{volume}{42}
  (\bibinfo{year}{2005}) \bibinfo{pages}{2688--2705}.
%Type = Article
\bibitem[{Lovadina(1998)}]{lova}
\bibinfo{author}{C.~Lovadina},
\newblock \bibinfo{title}{Analysis of a mixed finite element method for the
  {R}eissner-{M}indlin plate problems},
\newblock \bibinfo{journal}{Comput. Methods Appl. Mech. Engrg.}
  \bibinfo{volume}{163} (\bibinfo{year}{1998}) \bibinfo{pages}{71--85}.
%Type = Article
\bibitem[{Tessler and Hughes(1985)}]{tesshughes}
\bibinfo{author}{A.~Tessler}, \bibinfo{author}{T.~J.~R. Hughes},
\newblock \bibinfo{title}{A three-node {M}indlin plate element with improved
  transverse shear},
\newblock \bibinfo{journal}{Comput. Methods Appl. Mech. Engrg.}
  \bibinfo{volume}{50} (\bibinfo{year}{1985}) \bibinfo{pages}{1--101}.
%Type = Article
\bibitem[{Hughes et~al.(2005)Hughes, Cottrell, and Bazilevs}]{HCB05}
\bibinfo{author}{T.~J.~R. Hughes}, \bibinfo{author}{J.~A. Cottrell},
  \bibinfo{author}{Y.~Bazilevs},
\newblock \bibinfo{title}{Isogeometric analysis: {CAD}, finite elements,
  {NURBS}, exact geometry and mesh refinement},
\newblock \bibinfo{journal}{Comput. Methods Appl. Mech. Engrg.}
  \bibinfo{volume}{194} (\bibinfo{year}{2005}) \bibinfo{pages}{4135--4195}.
%Type = Article
\bibitem[{Bazilevs et~al.(2008)Bazilevs, Calo, Hughes, and Zhang}]{BCHZ}
\bibinfo{author}{Y.~Bazilevs}, \bibinfo{author}{V.~M. Calo},
  \bibinfo{author}{T.~J.~R. Hughes}, \bibinfo{author}{Y.~Zhang},
\newblock \bibinfo{title}{Isogeometric fluid-structure interaction: theory,
  algorithms, and computations},
\newblock \bibinfo{journal}{Comput. Mech.} \bibinfo{volume}{43}
  (\bibinfo{year}{2008}) \bibinfo{pages}{3--37}.
%Type = Unpublished
\bibitem[{Buffa et~al.(2010)Buffa, de~Falco, and Sangalli}]{BdFS}
\bibinfo{author}{A.~Buffa}, \bibinfo{author}{C.~de~Falco},
  \bibinfo{author}{G.~Sangalli}, \bibinfo{title}{Isogeometric {A}nalysis: new
  stable elements for the {S}tokes equation}, \bibinfo{year}{2010}.
  \bibinfo{note}{To appear on Inter. J. Numer. Meth. Fluids. DOI:
  10.1002/fld.2337}.
%Type = Article
\bibitem[{Auricchio et~al.(199)Auricchio, Beir\~{a}o~da Veiga, Lovadina, and
  Reali}]{ABLR08}
\bibinfo{author}{F.~Auricchio}, \bibinfo{author}{L.~Beir\~{a}o~da Veiga},
  \bibinfo{author}{C.~Lovadina}, \bibinfo{author}{A.~Reali},
\newblock \bibinfo{title}{The importance of the exact satisfaction of the
  incompressibility constraint in nonlinear elasticity: mixed {FEMs} versus
  {NURBS}-based approximations},
\newblock \bibinfo{journal}{Comput. Methods Appl. Mech. Engrg.}
  (\bibinfo{year}{199}) \bibinfo{pages}{314--323}.
%Type = Article
\bibitem[{Auricchio et~al.(2007)Auricchio, Beir\~{a}o~da Veiga, Buffa,
  Lovadina, Reali, and Sangalli}]{MR2361474}
\bibinfo{author}{F.~Auricchio}, \bibinfo{author}{L.~Beir\~{a}o~da Veiga},
  \bibinfo{author}{A.~Buffa}, \bibinfo{author}{C.~Lovadina},
  \bibinfo{author}{A.~Reali}, \bibinfo{author}{G.~Sangalli},
\newblock \bibinfo{title}{A fully ``locking-free'' isogeometric approach for
  plane linear elasticity problems: a stream function formulation},
\newblock \bibinfo{journal}{Comput. Methods Appl. Mech. Engrg.}
  \bibinfo{volume}{197} (\bibinfo{year}{2007}) \bibinfo{pages}{160--172}.
%Type = Article
\bibitem[{Benson et~al.(2010)Benson, Bazilevs, Hsu, and Hughes}]{BBHH}
\bibinfo{author}{D.~J. Benson}, \bibinfo{author}{Y.~Bazilevs},
  \bibinfo{author}{M.~C. Hsu}, \bibinfo{author}{T.~J.~R. Hughes},
\newblock \bibinfo{title}{Isogeometric shell analysis: The {R}eissner-{M}indlin
  shell},
\newblock \bibinfo{journal}{Comput. Methods Appl. Mech. Engrg.}
  \bibinfo{volume}{199} (\bibinfo{year}{2010}) \bibinfo{pages}{276--289}.
%Type = Article
\bibitem[{Lipton et~al.(2010)Lipton, Evans, Bazilevs, Elguedj, and
  Hughes}]{LEBEH}
\bibinfo{author}{S.~Lipton}, \bibinfo{author}{J.~A. Evans},
  \bibinfo{author}{Y.~Bazilevs}, \bibinfo{author}{T.~Elguedj},
  \bibinfo{author}{T.~J.~R. Hughes},
\newblock \bibinfo{title}{Robustness of isogeometric structural discretizations
  under severe mesh distortion},
\newblock \bibinfo{journal}{Comput. Methods Appl. Mech. Engrg.}
  \bibinfo{volume}{199} (\bibinfo{year}{2010}) \bibinfo{pages}{353--373}.
%Type = Article
\bibitem[{Buffa et~al.(2010)Buffa, Sangalli, and V{\'a}zquez}]{BSV}
\bibinfo{author}{A.~Buffa}, \bibinfo{author}{G.~Sangalli},
  \bibinfo{author}{R.~V{\'a}zquez},
\newblock \bibinfo{title}{Isogeometric analysis in electromagnetics:
  {B}-splines approximation},
\newblock \bibinfo{journal}{Comput. Methods Appl. Mech. Engrg.}
  \bibinfo{volume}{199} (\bibinfo{year}{2010}) \bibinfo{pages}{1143--1152}.
%Type = Unpublished
\bibitem[{Buffa et~al.(2011)Buffa, Rivas, Sangalli, and V{\'a}zquez}]{BRSV}
\bibinfo{author}{A.~Buffa}, \bibinfo{author}{J.~Rivas},
  \bibinfo{author}{G.~Sangalli}, \bibinfo{author}{R.~V{\'a}zquez},
  \bibinfo{title}{{Isogeometric Discrete Differential Forms in Three
  Dimensions}}, \bibinfo{year}{2011}. \bibinfo{note}{To appear on SIAM J.
  Numer. Anal. DOI: 10.1137/100786708}.
%Type = Book
\bibitem[{Cottrell et~al.(2009)Cottrell, Hughes, and Bazilevs}]{CoHuBa:book}
\bibinfo{author}{J.~A. Cottrell}, \bibinfo{author}{T.~J.~R. Hughes},
  \bibinfo{author}{Y.~Bazilevs}, \bibinfo{title}{Isogeometric {A}nalysis.
  {T}owards integration of {CAD} and {FEA}}, \bibinfo{publisher}{Wiley},
  \bibinfo{year}{2009}.
%Type = Article
\bibitem[{Hughes et~al.(2008)Hughes, Reali, and Sangalli}]{HRS08}
\bibinfo{author}{T.~J.~R. Hughes}, \bibinfo{author}{A.~Reali},
  \bibinfo{author}{G.~Sangalli},
\newblock \bibinfo{title}{{Duality and Unified Analysis of Discrete
  Approximations in Structural Dynamics and Wave Propagation: Comparison of
  $p$-method Finite Elements with $k$-method NURBS}},
\newblock \bibinfo{journal}{Comput. Methods Appl. Mech. Engrg.}
  \bibinfo{volume}{197} (\bibinfo{year}{2008}) \bibinfo{pages}{4104--4124}.
%Type = Unpublished
\bibitem[{Beir\~{a}o~da Veiga et~al.(2010)Beir\~{a}o~da Veiga, Buffa, Rivas,
  and Sangalli}]{BBRS}
\bibinfo{author}{L.~Beir\~{a}o~da Veiga}, \bibinfo{author}{A.~Buffa},
  \bibinfo{author}{J.~Rivas}, \bibinfo{author}{G.~Sangalli},
  \bibinfo{title}{Some estimates for $h-p-k-$refinement in isogeometric
  analysis}, \bibinfo{year}{2010}. \bibinfo{note}{To appear on Numer. Math.
  DOI: 10.1007/s00211-010-0338-z}.
%Type = Article
\bibitem[{Evans et~al.(2009)Evans, Bazilevs, Babu\v{s}ka, and Hughes}]{EBBH}
\bibinfo{author}{J.~A. Evans}, \bibinfo{author}{Y.~Bazilevs},
  \bibinfo{author}{I.~Babu\v{s}ka}, \bibinfo{author}{T.~J.~R. Hughes},
\newblock \bibinfo{title}{$n$-width, sup-infs, and optimality ratios for the
  $k$-version of the isogeometric finite element method},
\newblock \bibinfo{journal}{Comput. Methods Appl. Mech. Engrg.}
  \bibinfo{volume}{198} (\bibinfo{year}{2009}) \bibinfo{pages}{1726--1741}.
%Type = Article
\bibitem[{Arnold et~al.(2010)Arnold, Falk, and Winther}]{AFW10}
\bibinfo{author}{D.~N. Arnold}, \bibinfo{author}{R.~S. Falk},
  \bibinfo{author}{R.~Winther},
\newblock \bibinfo{title}{Finite element exterior calculus: from {H}odge theory
  to numerical stability},
\newblock \bibinfo{journal}{Bull. Amer. Math. Soc. (N.S.)} \bibinfo{volume}{47}
  (\bibinfo{year}{2010}) \bibinfo{pages}{281--354}.
%Type = Techreport
\bibitem[{Babuska and Pitk\"aranta(1988)}]{BP88}
\bibinfo{author}{I.~Babuska}, \bibinfo{author}{J.~Pitk\"aranta},
  \bibinfo{title}{The plate paradox for hard and soft simple support},
  \bibinfo{type}{Technical Report}, Inst. for Phys. Sci. and Tech., Univ. of
  Maryland, \bibinfo{year}{1988}.
%Type = Book
\bibitem[{de~Boor(2001)}]{DeBoor}
\bibinfo{author}{C.~de~Boor}, \bibinfo{title}{A practical guide to splines},
  volume~\bibinfo{volume}{27} of \textit{\bibinfo{series}{Applied Mathematical
  Sciences}}, \bibinfo{publisher}{Springer-Verlag}, \bibinfo{address}{New
  York}, \bibinfo{edition}{revised ed.} edition, \bibinfo{year}{2001}.
%Type = Article
\bibitem[{Bazilevs et~al.(2006)Bazilevs, Beir\~{a}o~da Veiga, Cottrell, Hughes,
  and Sangalli}]{BBCHS06}
\bibinfo{author}{Y.~Bazilevs}, \bibinfo{author}{L.~Beir\~{a}o~da Veiga},
  \bibinfo{author}{J.~A. Cottrell}, \bibinfo{author}{T.~J.~R. Hughes},
  \bibinfo{author}{G.~Sangalli},
\newblock \bibinfo{title}{Isogeometric analysis: approximation, stability and
  error estimates for {$h$}-refined meshes},
\newblock \bibinfo{journal}{Math. Models Methods Appl. Sci.}
  \bibinfo{volume}{16} (\bibinfo{year}{2006}) \bibinfo{pages}{1031--1090}.
%Type = Article
\bibitem[{Brezzi et~al.(1989)Brezzi, Bathe, and Fortin}]{brebaf}
\bibinfo{author}{F.~Brezzi}, \bibinfo{author}{K.~J. Bathe},
  \bibinfo{author}{M.~Fortin},
\newblock \bibinfo{title}{Mixed-interpolated elements for {R}eissner-{M}indlin
  plates},
\newblock \bibinfo{journal}{Inter. J. Numer. Methods Engrg.}
  \bibinfo{volume}{28} (\bibinfo{year}{1989}) \bibinfo{pages}{1787--1801}.
%Type = Article
\bibitem[{Arnold and Falk(1996)}]{af2}
\bibinfo{author}{D.~N. Arnold}, \bibinfo{author}{R.~S. Falk},
\newblock \bibinfo{title}{Asymptotic analysis of the boundary layer for the
  {R}eissner-{M}indlin plate model},
\newblock \bibinfo{journal}{SIAM J. Math. Anal.} \bibinfo{volume}{27}
  (\bibinfo{year}{1996}) \bibinfo{pages}{486--514}.
%Type = Article
\bibitem[{R\"ossle and S\"andig(2011)}]{RS11}
\bibinfo{author}{A.~R\"ossle}, \bibinfo{author}{A.~M. S\"andig},
\newblock \bibinfo{title}{Corner singularities and regularity results for the
  {R}eissner/{M}indlin plate model},
\newblock \bibinfo{journal}{J. of Elast.} \bibinfo{volume}{103}
  (\bibinfo{year}{2011}) \bibinfo{pages}{113--135}.
%Type = Article
\bibitem[{Chinosi and Lovadina(1995)}]{lovachinosi}
\bibinfo{author}{C.~Chinosi}, \bibinfo{author}{C.~Lovadina},
\newblock \bibinfo{title}{Numerical analysis of some mixed finite element
  methods for {R}eissner-{M}indlin plates},
\newblock \bibinfo{journal}{Comput. Mech.} \bibinfo{volume}{16}
  (\bibinfo{year}{1995}) \bibinfo{pages}{36--44}.

\end{thebibliography}

\end{document}